\date{\today}
\newcommand{\A}{{\mathcal A}}
\newcommand{\Z}{{\mathbb Z}}
\newcommand{\R}{{\mathbb R}}
\newcommand{\C}{{\mathbb C}}
\newcommand{\T}{{\mathbb T}}
\newcommand{\Q}{{\mathbb Q}}
\newcommand{\D}{{\mathbb D}}
\newcommand{\E}{{\mathcal E}}
\newcommand{\ZL}{{\mathcal Z}}
\newtheorem{theorem}{Theorem}[section]
\newtheorem{lemma}[theorem]{Lemma}
\theoremstyle{definition}
\newtheorem{remark}[theorem]{Remark}
\theoremstyle{definition}
\theoremstyle{definition}
\numberwithin{equation}{section}
\newcommand{\tr}{\mathrm{tr} }
\newcommand{\set}[1]{\left\{#1\right\}}
\newcommand{\eqdef}{\overset{\mathrm{def}}=}
\begin{document}

\title[Gordon Lemmas for CMV Matrices]{Purely Singular Continuous Spectrum for Sturmian CMV Matrices via Strengthened Gordon Lemmas}

\author[J.\ Fillman]{Jake Fillman}

\address{Department of Mathematics, Virginia Tech, 225 Stanger Street, Blacksburg, VA~24061, USA}

\email{fillman@vt.edu}

\begin{abstract}

The Gordon Lemma refers to a class of results in spectral theory which prove that strong local repetitions in the structure of an operator preclude the existence of eigenvalues for said operator. We expand on recent work of Ong and prove versions of the Gordon Lemma that are valid for CMV matrices and which do not restrict the parity of scales upon which repetitions occur. The key ingredient in our approach is a formula of Damanik-Fillman-Lukic-Yessen which relates two classes of transfer matrices for a given CMV operator. There are  many examples to which our result can be applied. We apply our theorem to complete the classification of the spectral type of CMV matrices with Sturmian Verblunsky coefficients; we prove that such CMV matrices have purely singular continuous spectrum supported on a Cantor set of zero Lebesgue measure for all (irrational) frequencies and all phases. We also discuss applications to CMV matrices with Verblunsky coefficients generated by general codings of rotations.

\end{abstract}

\maketitle




\section{Introduction}

In this paper, we will discuss the spectral type of aperiodic extended CMV matrices. Extended CMV matrices arise as a natural playground for spectral theoretic techniques, as they furnish canonical unitary analogs of Schr\"odinger operators and Jacobi matrices. Moreover, they are interesting in their own right, since they naturally arise in connection with quantum walks in one dimension \cite{CGMV,DEFHV,DFO, DFV}, the classical Ising model \cite{DFLY1, DMY2}, and orthogonal polynomials on the unit circle \cite{S1,S2}. More specifically, given a sequence $\alpha = (\alpha_n)_{n \in \Z}$ with $\alpha_n \in \D = \set{z \in \C : |z| < 1}$, the corresponding extended CMV matrix is given by $\E = \mathcal L \mathcal M$, where
$$
\mathcal L
=
\bigoplus_{j\in\Z} \Theta(\alpha_{2j}),
\quad
\mathcal M
=
\bigoplus_{j \in \Z} \Theta(\alpha_{2j+1}),
\quad
\Theta(\alpha)
=
\begin{pmatrix}
\overline\alpha & \rho \\
\rho & - \alpha
\end{pmatrix},
\quad
\rho
=
\sqrt{1-|\alpha|^2},
$$
and $\Theta(\alpha_n)$ acts on $\ell^2\big(\{n, n+1\} \big)$. A straightforward calculation reveals that $\E$ enjoys the matrix representation
\begin{equation} \label{def:extcmv}
\E
=
\begin{pmatrix}
\ddots & \ddots & \ddots & \ddots &&&&&  \\
  & \overline{\alpha_2}\rho_1 & -\overline{\alpha_2}\alpha_1 & \overline{\alpha_3} \rho_2 & \rho_3\rho_2 &&& \\
& \rho_2\rho_1 & -\rho_2\alpha_1 & -\overline{\alpha_3}\alpha_2 & -\rho_3\alpha_2 &&&  \\
&&& \overline{\alpha_4} \rho_3 & -\overline{\alpha_4}\alpha_3 & \overline{\alpha_5}\rho_4 & \rho_5\rho_4 & \\
&&& \rho_4\rho_3 & -\rho_4\alpha_3 & -\overline{\alpha_5}\alpha_4 & -\rho_5 \alpha_4 &  \\
&&&& \ddots & \ddots &  \ddots & \ddots
\end{pmatrix},
\end{equation}
where $\rho_n = \left( 1 - |\alpha_n|^2 \right)^{1/2}$ for every $n \in \Z$. We refer to $(\alpha_n)_{n \in \Z}$ as the sequence of \emph{Verblunsky coefficients} of $\E$. Note that all unspecified matrix entries are zero.

 In general, one hopes to study the direct spectral problem for such matrices and prove theorems that establish the spectral type for a class of sequences of Verblunsky coefficients -- for example, periodic coefficients always have purely absolutely continuous spectral type, and random coefficients (almost surely) have purely pure point spectral type.\footnote{Even in the random case, the spectral picture here is still somewhat unresolved. To the best of the author's knowledge, it is still not known that a CMV matrix whose Verblunsky coefficients are iid random variables with finitely supported single-site distributions (almost surely) exhibits pure point spectrum.} Recently, aperiodic dynamically defined models for which $\alpha_n$ assumes only finitely many values have been studied \cite{ADZ,DFO, DFV, damaniklenz:szegob, DMY1, DMY2, O14}. These models are popular as they furnish mathematical models of one-dimensional quasicrystals. In general, such models have a strong tendency to exhibit purely singular continuous spectral type. Since operators of the type just described always have empty absolutely continuous spectrum by Kotani theory (see \cite[Section~10.11]{S2}), the relevant challenge is to preclude the existence of eigenvalues. This is typically done with ``Gordon-type'' arguments which exclude eigenvalues by using resonances. A bit more specifically, one uses repetitions within the structure of the operator to build up resonances in the associated generalized eigenfunctions and thus preclude $\ell^2$ decay of the same. This idea was introduced by Gordon in \cite{Gordon76} and has subsequently proved quite fruitful in the analysis of Schr\"odinger, Sturm-Liouville, and Jacobi operators; see \cite{DamGordon, Damanik2004, DamanikStolz2000, Gordon86, Kruger, Seif2011, Seif2012, Seif2014, SeifVogt2014} for a few examples.

This philosophy was partially extended to CMV matrices by Ong in \cite{O12, O13, O14}, with the caveat that one could only use scales of repetition of even length. At the time, this appeared to be an intrinsic drawback to CMV matrices, arising from the readily apparent fact that even and odd Verblunsky coefficients are not on precisely equal footing vis-\`a-vis the generalized eigenvalue equation $\E \psi = z\psi$, an asymmetry that is manifested in the alternating structure of the Gesztesy--Zinchenko transfer matrices (which propagate solution data for the eigenvalue equation and will be defined presently). However, in a recent paper of Damanik-Fillman-Lukic-Yessen, it has been observed that a two-step GZ transfer matrix can be related to a two-step Szeg\H{o} transfer matrix in a simple fashion \cite{DFLY2}, which enables us to remove the parity restrictions in the CMV Gordon Lemmas by passing to Szeg\H{o} transfer matrices, which do treat even and odd Verblunsky coefficients on equal footing.

\subsection{Sturmian CMV Matrices}

As a prime example of the utility of the new Gordon Lemmas, we discuss extended CMV matrices whose Verblunsky coefficients are \emph{Sturmian}. We begin by very briefly recalling some of the relevant background; \cite{loth1, loth2, queff} are inspired references for the interested reader. Given an alphabet with two letters, say $\A = \set{0,1}$, consider a sequence $\omega \in \A^{\Z}$. The \emph{factor complexity function} of $\omega$ is defined by
$$
p_\omega(n)
=
\# F_\omega(n),
\quad n \geq 0,
$$
where $F_\omega(n)$ denotes the set of all distinct subwords of $\omega$ of length $n$. The celebrated theorem of Hedlund and Morse states that a recurrent sequence $\omega \in \A^{\Z}$ is periodic if and only if $p_\omega(n) \leq n$ for some $n \in \Z_+$ \cite{MH38}. We say that $\omega$ is \emph{Sturmian} if it is recurrent and satisfies $p_\omega(n) = n+1$ for every $n \geq 0$. Note that considering $n = 1$ already forces the underlying alphabet to have precisely two symbols. In light of the aforementioned Hedlund-Morse Theorem, Sturmian sequences are precisely those aperiodic sequences that are ``closest to periodic'' in the sense of combinatorial complexity.  Equivalently, it turns out that $\omega$ is Sturmian if and only if it is a so-called mechanical sequence of irrational slope. More precisely, $\omega$ is Sturmian if and only if there exist $\theta \in (0,1) \setminus \Q$ and $\varphi \in [0,1)$ such that $\omega = s_{\theta,\varphi}$ or $\omega = s_{\theta,\varphi}'$, where
\begin{align*}
s_{\theta,\varphi}(n)
& =
\lfloor (n+1)\theta + \varphi \rfloor
-
\lfloor n\theta + \varphi \rfloor,\\
s_{\theta,\varphi}'(n)
& =
\lceil (n+1)\theta + \varphi \rceil
-
\lceil n\theta + \varphi\rceil
\end{align*}
for $n \in \Z$. We call $\theta$ the \emph{frequency} and $\varphi$ the \emph{phase} of $\omega$. The set of all Sturmian words with a given frequency $\theta$ is called the associated \emph{subshift}:
\[
\Omega
=
\Omega_\theta
=
\{ s_{\theta,\varphi} : \varphi \in [0,1) \}
\cup
\{ s_{\theta,\varphi}' : \varphi \in [0,1) \}
\subseteq
\A^{\Z}.
\]
It is straightforward to check that $\Omega_\theta$ is a compact subset of $\A^{\Z}$ equipped with the product topology and that $\Omega$ is invariant under the action of the left shift $S:\A^{\Z} \to \A^{\Z}$ defined by $(S\omega)_n = \omega_{n+1}$. Additionally, $(\Omega_\theta,S)$ is a strictly ergodic dynamical system; that is, it is both minimal and uniquely ergodic. By a Sturmian CMV matrix, we shall then mean one whose Verblunsky coefficients assume two distinct values, modulated by a specific Sturmian sequence. More precisely, we choose $\beta \neq \gamma$  in $\D$, an irrational frequency $\theta \in (0,1) \setminus \Q$, and a Sturmian sequence $\omega \in \Omega_\theta$, and then define an extended CMV matrix $\E = \E_{\omega}$ by
\begin{equation} \label{eq:sturmalphas}
\alpha_\omega(n)
=
\beta + \omega_n(\gamma-\beta)
=
\begin{cases}
\beta & \omega_n = 0 \\
\gamma  & \omega_n = 1
\end{cases}
\end{equation}

\begin{theorem} \label{t:sturm:scspec}
For all $\beta \neq \gamma$ in $\D$ and all Sturmian sequences $\omega = s_{\theta,\varphi}$ or $\omega = s_{\theta,\varphi}' \in \A^{\Z}$, $\E_{\omega}$ has purely singular continuous spectrum supported on a Cantor set of zero Lebesgue measure.
\end{theorem}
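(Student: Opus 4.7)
The proof breaks into three pieces: (a) $\sigma(\E_\omega)$ is a Cantor set of zero Lebesgue measure, (b) $\sigma_{\mathrm{ac}}(\E_\omega) = \emptyset$, and (c) $\sigma_{\mathrm{pp}}(\E_\omega) = \emptyset$. Because $(\Omega_\theta, S)$ is strictly ergodic and in particular minimal, $\sigma(\E_\omega)$ and its ac/sc/pp components are constant over $\omega \in \Omega_\theta$, so it suffices to verify each statement for one convenient choice of $\omega$ (or to produce $\omega$-uniform bounds).

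For part (a) I would import the prior results on aperiodic subshifts of low complexity in the CMV setting: the Sturmian hull has finite local complexity and admits a primitive substitution structure, so the Schrödinger-style trace-map/hyperbolicity analysis transfers to CMV matrices and yields the zero-measure Cantor structure of the spectrum. For part (b) I would invoke Kotani theory for extended CMV matrices (see \cite[Section 10.11]{S2}): the Verblunsky coefficients $\alpha_\omega$ take only finitely many values and $\omega$ is aperiodic (by the Hedlund--Morse theorem, since $p_\omega(n) = n + 1 > n$), so $\sigma_{\mathrm{ac}}(\E_\omega)$ is empty.

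The crux of the theorem is part (c), and here my plan is to combine two ingredients: (1) the combinatorial structure of Sturmian sequences, and (2) the strengthened CMV Gordon lemma proved earlier in the paper. For (1), if $p_n/q_n$ are the convergents of $\theta = [a_1, a_2, \ldots]$, then for every $\omega \in \Omega_\theta$ there exist arbitrarily long cubes $UUU$ occurring as factors of $\omega$ with $|U| = q_n$ and with starting positions at bounded distance from the origin; this is a standard consequence of the Sturmian substitution structure together with minimality. Through \eqref{eq:sturmalphas} this translates into three-fold local repetitions
\[
\alpha_\omega(k_n + j) = \alpha_\omega(k_n + q_n + j) = \alpha_\omega(k_n + 2 q_n + j), \qquad 0 \leq j < q_n,
\]
with $|k_n|$ uniformly bounded and $q_n \to \infty$. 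This is exactly the hypothesis of the parity-free Gordon lemma, which then forces every nonzero solution of $\E_\omega \psi = z\psi$ to fail $\ell^2$ decay; hence $\sigma_{\mathrm{pp}}(\E_\omega) = \emptyset$, completing (c).

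The main obstacle, and the reason the strengthened Gordon lemma is needed, is that the scales $q_n$ arising from the continued fraction expansion of $\theta$ have mixed parity for generic $\theta$. The prior CMV Gordon lemmas of Ong could only exploit $q_n$ of even length, so when infinitely many $q_n$ are odd one could not produce enough Gordon scales to exclude eigenvalues uniformly in $\theta$ and $\varphi$. The new Gordon lemma of the paper, proved via the Damanik--Fillman--Lukic--Yessen identity relating two-step Gesztesy--Zinchenko and Szegő transfer matrices, removes this restriction and is exactly what makes the argument go through for \emph{all} irrational frequencies and all phases.
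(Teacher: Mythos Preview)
Your argument for part (c) has a genuine gap. The combinatorial claim---that every Sturmian $\omega\in\Omega_\theta$ contains cubes $UUU$ with $|U|=q_n$ at uniformly bounded distance from the origin---is not a standard fact, and it fails for frequencies $\theta$ of bounded type. When the partial quotients $a_n$ are bounded (e.g.\ the Fibonacci case $a_n\equiv 1$), one cannot guarantee the three-block repetition $\alpha_\omega(j-n_k)=\alpha_\omega(j)=\alpha_\omega(j+n_k)$ near a fixed site for \emph{every} $\omega$; this is precisely why, already in the Schr\"odinger setting, the ``all $\omega$, all $\theta$'' results of \cite{BIST,DKL2000} proceed via the \emph{two}-block Gordon lemma supplemented by uniform bounds on the transfer-matrix traces along the spectrum. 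The three-block route yields only almost-sure statements and requires large partial quotients---compare Theorem~\ref{t:rotcode} in this very paper, which invokes Theorem~\ref{t:3blgord} and obtains merely an a.e.\ conclusion under the hypothesis $\limsup_j a_j\ge 4$.

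The paper's proof of Theorem~\ref{t:sturm:scspec} follows the two-block strategy instead. The combinatorial input from \cite[Lemma~4.1]{DKL2000} produces, for every $\omega$ and every $k\ge 3$, a word $s$ with $|s|\in\{q_{k-1},q_k,q_{k+1},q_{k+1}+q_k\}$ such that $\omega|_{[0,2|s|-1]}$ is a cyclic permutation of $ss$; this gives only \emph{squares}, not cubes. The extra hypothesis of Theorem~\ref{t:2blgord}---a uniform bound on $|\tr T_\omega(n_k,0;\zeta)|$ for $\zeta\in\Sigma$---is then supplied by Theorem~\ref{t:sturm:tracebounds}, which identifies $\Sigma$ with the set $B'$ of spectral parameters at which the trace-map orbit remains bounded (cyclicity of the trace handles the cyclic permutation). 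This trace-bound ingredient is the essential idea your proposal is missing; without it the two-block hypothesis cannot be verified, and the three-block hypothesis is simply unavailable for all $\omega$.

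A smaller inaccuracy: the pure point spectrum is not constant over the hull by minimality (only $\sigma$ and $\sigma_{\mathrm{ac}}$ are); ergodicity gives only almost-sure constancy. So one cannot reduce (c) to a single convenient $\omega$---absence of eigenvalues must be established for each $\omega$ directly, as the paper does in Lemma~\ref{l:bsubsetz}.
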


It is well-known that the spectrum $\Sigma = \sigma(\E_{\omega})$ is independent of $\omega \in \Omega_\theta$, i.e., $\Sigma$ depends only on $\beta$, $\gamma$, and $\theta$. Moreover, $\Sigma$ is a Cantor set of zero Lebesgue measure for all $\beta \neq \gamma$, and Sturmian $\omega$, and hence, it cannot support absolutely continuous measures \cite[Theorem~1.1]{damaniklenz:szegob}; see also \cite{damaniklenz:b}. Consequently, the new content of Theorem~\ref{t:sturm:scspec} is supplied by proving that $\E_{\omega}$ does not have eigenvalues. Let us note that this extends the results of \cite{O14} in two directions. First, we do not need to restrict the continued fraction coefficients of the frequency $\theta$. Secondly, \cite{O14} makes a statement for Lebesgue-almost every phase $\varphi \in [0,1)$, while this result describes the spectral type for arbitrary phase.

\subsection{Codings of Rotations} We may also apply our new Gordon Lemmas to CMV matrices whose Verblunsky coefficients are generated by more general codings of rotations. Given $\theta \in (0,1) \setminus \Q$, $\varphi \in [0,1)$, and a non-degenerate interval $I \subseteq \T \eqdef \R / \Z$, the associated \emph{rotation coding sequence} $u \in \{0,1\}^\Z$ is given by
\begin{equation} \label{eq:coddef}
u(n)
=
\chi_I(n\theta + \varphi).
\end{equation}
Notice that Sturmian sequences are a special case of rotation codings, obtained by setting $I = [1-\theta,1)$ or $I = (1-\theta,1]$. As with Sturmian sequences, one can use codings of rotations to generate CMV matrices; more specifically, given $\beta \neq \gamma$ in $\D$, and $\theta, \varphi$, and $I$ as above, we consider $\E = \E_{\theta,\varphi}^I$ given by
\[
\alpha_{\theta,\varphi}^I(n)
=
\begin{cases} \beta & u(n) = 0\\ \gamma & u(n) = 1 
\end{cases}
\] 
where $u_n$ is defined by \eqref{eq:coddef}. Our theorem for these is precisely \cite[Theorem~4]{O14} with the hypothesis that $\theta$ have infintely many even continued fraction denominators removed.

\begin{theorem} \label{t:rotcode}
Let $\theta \in (0,1) \setminus \Q$ have continued fraction expansion
\[
\theta
=
\cfrac{1}{a_1 + \cfrac{1}{a_2 + \cfrac{1}{\ddots}}},
\]
with $a_j \in \Z_+$ for all $j$, and fix a nondegenerate interval $I \subseteq \T$. If $\limsup_{j \to \infty} a_j \geq 4$, then $\E_{\theta,\varphi}^I$ has purely continuous spectrum for Lebesgue almost-every $\varphi \in [0,1)$.
\end{theorem}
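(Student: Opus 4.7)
The plan is to take the argument for \cite[Theorem~4]{O14} essentially verbatim and replace the parity-restricted CMV Gordon Lemma used there with the parity-free version developed earlier in this paper. The absolutely continuous part need not be revisited, since for any rotation coding the spectrum $\sigma(\E_{\theta,\varphi}^I)$ is already known to be a zero-measure Cantor set independent of $\varphi$ by \cite[Theorem~1.1]{damaniklenz:szegob} (together with Kotani theory, which excludes absolutely continuous spectrum once the spectrum has Lebesgue measure zero). Hence it suffices to exclude eigenvalues for Lebesgue-almost every $\varphi \in [0,1)$.

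Let $p_n/q_n$ denote the continued fraction convergents of $\theta$ and put $\eta_n = q_n\theta - p_n$, so $|\eta_n| \to 0$. When $a_{n+1} \geq 4$, the three-distance theorem applied to $\{k\theta + \varphi\}_{k \in \Z}$ implies that, for any nondegenerate interval $I \subseteq \T$ and any $\varphi$ outside a bad set $B_n \subseteq [0,1)$ of Lebesgue measure $O(|\eta_n|)$, the coding sequence $u$ satisfies $u(j) = u(j+q_n)$ for all $j$ in a window of length at least $3q_n$ centered at the origin; this is precisely the combinatorial/dynamical input worked out in \cite{O14}. Since by hypothesis there is an infinite sequence $n_k$ with $a_{n_k+1} \geq 4$, and along a sufficiently sparse such subsequence $\sum_k |\eta_{n_k}| < \infty$, Borel-Cantelli yields a full-measure set $\Phi \subseteq [0,1)$ of good phases: for each $\varphi \in \Phi$, the Verblunsky sequence $\alpha_{\theta,\varphi}^I$ exhibits the triple-repetition pattern on the scales $q_{n_k}$ for all but finitely many $k$.

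For each $\varphi \in \Phi$, the strengthened CMV Gordon Lemma now applies on the scale $q_{n_k}$ irrespective of its parity: any solution $\psi$ of $\E_{\theta,\varphi}^I \psi = z\psi$ with $z \in \partial \D$ satisfies a lower bound of the form
\[
\max\bigl\{\|\psi\|_{[-q_{n_k},-1]},\,\|\psi\|_{[q_{n_k},2q_{n_k}-1]}\bigr\}
\geq c \, \|\psi\|_{[0,q_{n_k}-1]}
\]
for an absolute constant $c>0$, which forces $\psi \notin \ell^2(\Z)$ and hence excludes $z$ from the point spectrum. The principal obstacle (and the reason we are restricted to a.e.\ $\varphi$, rather than every $\varphi$ as in Theorem~\ref{t:sturm:scspec}) is the Borel-Cantelli step: unlike in the $S$-adic Sturmian setting, the endpoints of $I$ are decoupled from the rotation, so the $q_n$-periodicity of the coding can be disrupted when the rotation orbit of $\partial I$ drifts too close to $\varphi$. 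The sets $B_n$ are thus nontrivial, and only their vanishing measure along a sparsified sequence of scales allows the conclusion. The decisive gain over \cite{O14} is that the new Gordon Lemma places no parity restriction on $q_{n_k}$, eliminating the hypothesis that $\theta$ have infinitely many even continued fraction denominators.
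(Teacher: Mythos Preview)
Your overall shape is right---replace Ong's parity-restricted Gordon lemma by Theorem~\ref{t:3blgord} and otherwise follow the combinatorial input from the rotation-coding literature---but the measure estimate driving your Borel--Cantelli step is wrong, and this is a genuine gap. For the three-block repetition $u(j)=u(j\pm q_n)$ over $0\le j\le q_n-1$ to fail, some orbit point $j\theta+\varphi$ with $-q_n\le j\le q_n-1$ must land in the symmetric difference $I\triangle(I-\eta_n)$, which has measure $2|\eta_n|$. There are $2q_n$ values of $j$, so the bad set $B_n$ has measure of order $q_n|\eta_n|\asymp q_n/q_{n+1}$, \emph{not} $|\eta_n|$. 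Since $q_n/q_{n+1}\ge 1/(a_{n+1}+1)$, the hypothesis $\limsup a_j\ge 4$ only gives $|B_{n_k}|$ bounded above by a constant strictly less than~$1$; no amount of sparsification makes these summable, and Borel--Cantelli does not apply.

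The paper's route avoids this by observing that reverse Fatou already gives the \emph{good} set $\limsup_k G_{n_k}$ measure at least $\limsup_k(1-4q_{n_k}/q_{n_k+1})>0$ (this is the Kaminaga estimate, cf.\ \cite{Kam96}), and then upgrading from positive measure to full measure via the almost-sure constancy of the point spectrum for the ergodic family $(\E_{\theta,\varphi}^I)_\varphi$ (cf.\ \cite[Theorem~10.16.1]{S2}). This ergodicity step is essential and is what you are missing. A minor aside: the theorem only asserts purely \emph{continuous} spectrum, so your opening paragraph on zero-measure Cantor spectrum and Kotani theory is unnecessary (and the cited result of \cite{damaniklenz:szegob} addresses Sturmian subshifts, not arbitrary rotation codings).
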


\subsection{Gordon Lemmas for CMV Matrices}

Let us now precisely describe the new CMV Gordon Lemmas. Given $\alpha \in \D$ and $z \in \partial \D$, the corresponding \emph{Szeg\H{o} matrix} is defined by
$$
S(\alpha,z)
=
\frac{1}{\rho} 
\begin{pmatrix}
z & - \overline \alpha \\
-\alpha z & 1
\end{pmatrix},
\quad
\rho = \rho_\alpha = \sqrt{1 - |\alpha|^2}.
$$
The Szeg\H{o} transfer matrices associated to $\E_\alpha$ are then defined by
$$
T(n,m;z)
=
\begin{cases}
S(\alpha_{n-1},z) \cdots S(\alpha_m,z) & n > m \\
I & n = m \\
T(m,n;z)^{-1} & n < m
\end{cases}
$$
for $n, m \in \Z$ and $z \in \partial \D$. We will also consider the Gesztesy--Zinchenko transfer matrices \cite{GZ06}. Define
\begin{equation} \label{gz:onestepmats:def}
P(\alpha,z)
=
\frac{1}{\rho}
\begin{pmatrix}
-\alpha & z^{-1} \\
z & - \overline \alpha
\end{pmatrix}
,
\quad
Q(\alpha,z)
=
\frac{1}{\rho}
\begin{pmatrix}
-\overline\alpha & 1 \\
1 & - \alpha
\end{pmatrix},
\, \alpha \in \D,\,  z \in \C \setminus \{0\},
\end{equation}
where $\rho = \left( 1 - |\alpha|^2\right)^{1/2}$ as before. One may use $P$ and $Q$ to capture the recursion described by the difference equation $\E u = zu$ in the following sense. If $u \in \C^{\Z}$ is such that $\E u = zu$, then we define $v = \mathcal M u$, and since $\Theta(\alpha)$ is real-symmetric for all $\alpha \in \D$, we have
\[
\E^\top \! v 
=
\mathcal M \mathcal L \mathcal M u
=
z v.
\] 
Now, with $\Phi(n) = (u_n, v_n)^\top$, Gesztesy and Zinchenko's arguments show that
\begin{equation} \label{eq:gz:stepbystep}
\Phi(n+1)
=
\begin{cases}
P(\alpha_n,z)
\Phi(n)
& n \text{ is even} \\
Q(\alpha_n,z)
\Phi(n)
& n \text{ is odd}
\end{cases}
\end{equation}
for all $n \in \Z$; see also \cite[Proposition~3]{MSB} for a  version of this formalism for more general finite-width unitary operators. It is relatively easy to see how these matrices arise from the definitions of $u$ and $v$. In particular, \eqref{eq:gz:stepbystep} is exactly what one observes if one solves
$$
\Theta(\alpha_{2j-1})
\begin{pmatrix} u_{2j-1} \\ u_{2j} \end{pmatrix}
=
\begin{pmatrix} v_{2j-1} \\ v_{2j} \end{pmatrix},
\quad
\Theta(\alpha_{2j})
\begin{pmatrix} v_{2j} \\ v_{2j+1} \end{pmatrix}
=
z \begin{pmatrix} u_{2j} \\ u_{2j+1} \end{pmatrix}
$$
for $u_{n+1}$ and $v_{n+1}$ in terms of $v_n$ and $u_n$. Notice that we have shifted Gesztesy--Zinchenko's defintions slightly so that our conventions for the definition of $\E$ line up with those of \cite{S1,S2}. In light of these observations, we denote $Y(n,z) = P(\alpha_n,z)$ when $n$ is even and $Y(n,z) = Q(\alpha_n,z)$ when $n$ is odd, and we define the \emph{Gesztesy-Zinchenko cocycle} by
\begin{equation} \label{gz:def}
Z(n,m;z)
=
\begin{cases}
Y(n - 1,z) \cdots Y(m, z) & n > m \\
I & n = m \\
Z(m,n;z)^{-1} & n < m
\end{cases}
\end{equation}
If $u$, $v$, and $\Phi$ are as above, we have
\begin{equation} \label{eq:gzsoltransfer}
\Phi(n)
=
Z(n, m; z) \Phi(m)
\text{ for all } n,m \in \Z
\end{equation}
by \eqref{eq:gz:stepbystep}; see also \cite[Lemma~2.2]{GZ06}. We will typically abbreviate and refer to $Z(n,m;z)$ as a GZ matrix.

\bigskip

Each family of cocycles has distinct applications, advantages, and drawbacks. The Szeg\H{o}-type matrices are easier to work with in general, since every one-step Szeg\H{o} matrix has the same structure, whereas the one-step GZ matrices alternate. However, the GZ matrices propagate solution data for the eigenvalue equation, which is more useful from the point of view of spectral theory, since one can translate information about solutions into information about resolvents rather directly. Delightfully, there is a rather straightforward manner in which the two families of matrices are related, which was observed in \cite{DFLY2}:
\[
Q(\alpha,z) P(\beta,z)
=
z^{-1} S(\alpha,z) S(\beta,z)
\]
for all $\alpha,\beta \in \D$ and $z \in \C \setminus \{0\}$. In particular, one has
\begin{equation} \label{eq:multistep:sgz}
z^{-n} T(2n,0;z)
=
Z(2n,0;z)
\end{equation}
for all $z \in \C \setminus \{0\}$ and all $n \geq0$. This relationship is the key that enables us to prove new versions of the two- and three-block Gordon lemmas for CMV matrices with no restrictions on the parity of the scales of repetitions. In order to avoid losing information when we use the identity \eqref{eq:multistep:sgz}, we will assume throughout that $\alpha$ is bounded away from $\partial \D$, that is, $\|\alpha\|_\infty < 1$.

\begin{theorem}[Two-Block Gordon Lemma] \label{t:2blgord}
Let $\alpha$ be bounded away from $\partial \D$. Suppose that there exist $z_0 \in \partial \D$, a constant $c > 0$, and a sequence of positive integers $n_k \to \infty$ such that
\begin{equation} \label{eq:2bl:rep}
\alpha(j)
=
\alpha(j+n_k),
\quad
0 \leq j \leq n_k - 1
\end{equation}
and $|\tr(T(n_k,0;z_0))| \leq c$ for all $k$. Then $z_0$ is not an eigenvalue of $\E_\alpha$. More precisely, there is a constant $\Delta > 0$ that depends only on $c$ and $\|\alpha\|_\infty$ such that 
\begin{equation} \label{eq:Phi:lb}
\max\big( \| \Phi(\underline{n_k}) \|, \| \Phi(2n_k) \| \big)
\geq
\Delta\|\Phi(0)\|
\end{equation}
for all $k$, where $\Phi(n) = (u_n,v_n)^\top$, $u$ solves $\E u = z_0 u$, $v = \mathcal M u$, and $\underline{n_k} = 2\lfloor n_k/2\rfloor$ denotes the greatest even integer which fails to exceed $n_k$.
\end{theorem}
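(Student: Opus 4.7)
The plan is to run the classical Gordon maneuver at the level of the Szegő transfer matrices, where even and odd Verblunsky coefficients enter symmetrically, and then transport the conclusion back to the Gesztesy--Zinchenko cocycle via the identity \eqref{eq:multistep:sgz}. Set $M_k = T(n_k,0;z_0)$. The repetition hypothesis \eqref{eq:2bl:rep} identifies the Szegő factors appearing in $T(2n_k,n_k;z_0)$ with those appearing in $M_k$, so $T(2n_k,0;z_0) = M_k^2$. A direct computation gives $\det S(\alpha,z) = z$, hence $|\det M_k| = |z_0|^{n_k} = 1$. Combined with $|\tr M_k| \leq c$, the Cayley--Hamilton relation $M_k^2 = (\tr M_k)M_k - (\det M_k)I$ yields the pointwise bound
\[
\max\bigl(\|M_k w\|,\, \|M_k^2 w\|\bigr) \geq \frac{\|w\|}{c+1}
\]
for every $w \in \C^2$, to be applied with $w = \Phi(0)$.

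The main obstacle is translating this Szegő estimate into the desired GZ estimate when $n_k$ is odd, since in that case $M_k$ is a product of an odd number of one-step Szegő matrices and does not line up with \eqref{eq:multistep:sgz}. To handle this, I factor off the last Szegő matrix: $M_k = S(\alpha_{n_k-1},z_0)\, T(n_k - 1,0;z_0)$. Since $n_k - 1$ is even, \eqref{eq:multistep:sgz} gives $\|T(n_k - 1, 0; z_0)\Phi(0)\| = \|\Phi(n_k-1)\| = \|\Phi(\underline{n_k})\|$, and a routine estimate on the one-step Szegő matrices yields $\|S(\alpha,z)\| \leq 2/\sqrt{1-|\alpha|^2}$ for $|z|=1$, so the standing assumption $\|\alpha\|_\infty < 1$ furnishes a uniform bound $C_0 = C_0(\|\alpha\|_\infty)$ with $\|S(\alpha_{n_k-1},z_0)\| \leq C_0$. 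Thus $\|M_k \Phi(0)\| \leq C_0 \|\Phi(\underline{n_k})\|$ in the odd case, while in the even case \eqref{eq:multistep:sgz} directly gives $\|M_k\Phi(0)\| = \|\Phi(n_k)\| = \|\Phi(\underline{n_k})\|$. In either parity, $2n_k$ is even, so $\|M_k^2 \Phi(0)\| = \|T(2n_k,0;z_0)\Phi(0)\| = \|\Phi(2n_k)\|$. Inserting these identifications into the displayed Cayley--Hamilton bound produces \eqref{eq:Phi:lb} with $\Delta = \bigl[(c+1)\max(1,C_0)\bigr]^{-1}$, which depends only on $c$ and $\|\alpha\|_\infty$, as required.

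To conclude that $z_0$ is not an eigenvalue, I will argue by contradiction: suppose $u$ is a non-trivial $\ell^2$ solution of $\E u = z_0 u$. Then $v = \mathcal M u \in \ell^2(\Z)$ by unitarity of $\mathcal M$, so $\Phi \in \ell^2(\Z; \C^2)$, forcing $\|\Phi(n)\| \to 0$ as $|n|\to\infty$. Non-triviality of $u$ forces $\Phi(0) \neq 0$, because the one-step transition matrices in \eqref{eq:gz:stepbystep} are invertible and $\Phi$ satisfies a first-order recursion, so $\Phi(n_0)=0$ for some $n_0$ would propagate to $\Phi \equiv 0$ and hence to $u \equiv 0$. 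But then \eqref{eq:Phi:lb} furnishes a strictly positive lower bound $\Delta\|\Phi(0)\| > 0$ for $\max(\|\Phi(\underline{n_k})\|, \|\Phi(2n_k)\|)$ along $k \to \infty$, which contradicts the $\ell^2$ decay of $\Phi$ and completes the proof.
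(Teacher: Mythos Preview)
Your proof is correct and follows essentially the same route as the paper: apply Cayley--Hamilton to the Szeg\H{o} transfer matrices $T(n_k,0;z_0)$ (using $|\det| = 1$ and the trace bound), and then convert to a GZ statement via \eqref{eq:multistep:sgz}, peeling off one Szeg\H{o} factor when $n_k$ is odd. The only cosmetic difference is organizational: the paper treats the even case by invoking \cite{O14} (Cayley--Hamilton directly on the GZ cocycle, whose trace equals the Szeg\H{o} trace in that case) and reserves the Szeg\H{o}-level argument for odd $n_k$, whereas you run the Szeg\H{o}-level argument uniformly for both parities, which is arguably cleaner and yields a slightly different explicit constant ($\Delta = [(c+1)C_0]^{-1}$ versus the paper's $\Delta = \tfrac{1}{2}\min(1,c^{-1})/\Gamma$).
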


If the Verblunsky coefficients exhibit an additional block of repetitions, one may abandon the hypothesis on the traces.

\begin{theorem}[Three-Block Gordon Lemma] \label{t:3blgord}
Suppose $\alpha$ is bounded away from $\partial \D$ and that there exist positive integers $n_k \to \infty$ such that $\alpha(j-n_k) = \alpha(j) = \alpha(j+n_k)$ for all $0 \leq j \leq n_k - 1$. Then $\E_\alpha$ has purely continuous spectrum.
\end{theorem}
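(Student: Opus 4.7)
The plan is to run the classical three-block Gordon argument at the level of the Szegő transfer matrices and then transport the conclusion to the CMV generalized eigenvalue equation via \eqref{eq:multistep:sgz}. I would fix $z \in \partial\D$ and suppose, for contradiction, that $u$ is a nonzero $\ell^2$ solution of $\E u = zu$; with $v = \mathcal M u$ and $\Phi(n) = (u_n, v_n)^\top$, unitarity of $\mathcal M$ forces $\Phi(n) \to 0$ as $|n| \to \infty$, while invertibility of the one-step GZ matrices (using $\|\alpha\|_\infty < 1$) guarantees $\Phi(0) \neq 0$. Set $M_k := T(n_k, 0; z)$ and, after fixing a branch of $z^{1/2}$, $\widetilde M_k := z^{-n_k/2} M_k \in \SL(2, \C)$. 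The three-block hypothesis immediately gives $T(0, -n_k; z) = M_k$ and $T(2n_k, 0; z) = M_k^2$.

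The heart of the argument is the dictionary
\[
\|\Phi(-n_k)\| = \|\widetilde M_k^{-1} \Phi(0)\|, \qquad \|\Phi(n_k)\| = \|\widetilde M_k \Phi(0)\|, \qquad \|\Phi(2n_k)\| = \|\widetilde M_k^2 \Phi(0)\|.
\]
The $\Phi(2n_k)$ equality is immediate from \eqref{eq:multistep:sgz}, and the $\Phi(\pm n_k)$ equalities in the case of even $n_k$ follow from \eqref{eq:multistep:sgz} applied at scale $n_k$. The main obstacle is the odd-$n_k$ case, where \eqref{eq:multistep:sgz} does not apply to a single-block transfer. To handle it, I would record the identity
\[
P(\alpha, z)\, S(\alpha, z)^{-1} = S(\alpha, z)^{-1}\, Q(\alpha, z) = J := \begin{pmatrix} 0 & z^{-1} \\ 1 & 0 \end{pmatrix},
\]
which follows from a short computation with \eqref{gz:onestepmats:def} and has the virtues of being independent of $\alpha$ and unitary on $\partial \D$. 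Peeling off a single boundary factor ($P$ on the left of $Z(n_k, 0; z)$, $Q$ on the right of $Z(0, -n_k; z)$), pairing the remaining even-length product into Szegő form via \eqref{eq:multistep:sgz}, and absorbing the peeled factor as $J$ times one extra Szegő matrix yields $Z(n_k, 0; z) = z^{1/2} J\, \widetilde M_k$ and $Z(-n_k, 0; z) = z^{-1/2} J^{-1}\widetilde M_k^{-1}$; the norm equalities then follow from unitarity of $J$.

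With the dictionary in hand, I would invoke the standard $\SL(2, \C)$ Gordon trichotomy: from the Cayley--Hamilton relations $A^2 + I = \tr(A)\, A$ and $A + A^{-1} = \tr(A)\, I$ one obtains $\max(\|A^{-1}\Phi\|, \|A\Phi\|, \|A^2 \Phi\|) \geq \tfrac12 \|\Phi\|$ for every $A \in \SL(2, \C)$ and every $\Phi \neq 0$, by splitting on whether $|\tr(A)| \leq 1$ (use the first identity) or $|\tr(A)| > 1$ (use the second). Applying this with $A = \widetilde M_k$ and $\Phi = \Phi(0)$ produces $\max(\|\Phi(-n_k)\|, \|\Phi(n_k)\|, \|\Phi(2n_k)\|) \geq \tfrac12 \|\Phi(0)\| > 0$ for every $k$, contradicting $\Phi(n) \to 0$. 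Hence $z$ is not an eigenvalue of $\E_\alpha$; since $z \in \partial\D$ was arbitrary, $\E_\alpha$ has no point spectrum, which is the desired purely continuous conclusion.
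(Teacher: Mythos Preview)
Your proof is correct and takes a genuinely cleaner route than the paper's. The paper applies Cayley--Hamilton to the Szeg\H{o} matrices exactly as you do, but then, for odd $n_k$, it only knows how to convert Szeg\H{o} data to GZ data at the even scale $\underline{n_k} = n_k - 1$ via \eqref{eq:multistep:sgz}; bridging the one-step mismatch costs a factor of $\Gamma = \sup_n \|S(\alpha_n,z)\|$, so the paper's conclusion is
\[
\max\big(\|\Phi(-\underline{n_k})\|, \|\Phi(\underline{n_k})\|, \|\Phi(2n_k)\|\big) \ge \frac{1}{2\Gamma},
\]
and this is also why the boundedness assumption on $\|\alpha\|_\infty$ is genuinely used there. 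Your observation that
\[
P(\alpha,z)\,S(\alpha,z)^{-1} = S(\alpha,z)^{-1} Q(\alpha,z) = J = \begin{pmatrix} 0 & z^{-1} \\ 1 & 0 \end{pmatrix}
\]
is independent of $\alpha$ and \emph{unitary} on $\partial\D$ is a real improvement: it upgrades the single-step correction from an estimate to an isometry, yielding the exact dictionary $\|\Phi(\pm n_k)\| = \|\widetilde M_k^{\pm 1}\Phi(0)\|$ at the actual scale $n_k$ (not just $\underline{n_k}$) and the sharp constant $\tfrac12$ in place of $\tfrac{1}{2\Gamma}$. In particular, your argument never invokes the bound $\|\alpha\|_\infty < 1$ to control $\Gamma$; it is used only to ensure invertibility of the one-step matrices, which is automatic for Verblunsky coefficients in $\D$. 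The paper's route has the minor advantage that it recycles the two-block argument verbatim when $|\tr(M_k)| \le 1$, but your uniform treatment via the $\SL(2,\C)$ trichotomy is more transparent.
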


Moreover, one need not insist that repetitions be exact in Theorem~\ref{t:3blgord}. We refer to sequences of Verblunsky coefficients that obey an approximate version of the three-block repetition property as Gordon sequences. More precisely, let us say that $\alpha \in \D^{\Z}$ is a \emph{Gordon sequence} if it is bounded away from $\partial \D$ and there exist positive integers $n_k \to \infty$ such that 
\begin{equation} \label{eq:gorddef}
\lim_{k\to\infty} 
C^{n_k} \max_{0 \leq j \leq n_k - 1}|\alpha(j) - \alpha(j \pm n_k)|
=
0
\end{equation}
for all $C > 0$. The $\pm$ in \eqref{eq:gorddef} is meant to signify that the equation is true for either choice of sign. These sequences also produce CMV operators with empty point spectrum.

\begin{theorem} \label{t:gordpot}
If $\alpha$ is a Gordon sequence, then $\E_\alpha$ has purely continuous spectrum.
\end{theorem}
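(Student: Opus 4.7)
The plan is to treat $\alpha$ as a super-exponentially close perturbation of an $n_k$-periodic sequence to which the three-block Gordon mechanism of Theorem~\ref{t:3blgord} applies in quantitative form, and then to transfer the resulting cocycle lower bound back to $\alpha$ using the Gordon-sequence decay rate~\eqref{eq:gorddef}. Fix $z_0\in\partial\D$, suppose $u\in\C^{\Z}\setminus\{0\}$ solves $\E_\alpha u = z_0 u$, set $v=\mathcal M u$ and $\Phi(n)=(u_n,v_n)^\top$, and normalize $\|\Phi(0)\|=1$. It will suffice to produce indices $|j_k|\to\infty$ along which $\|\Phi(j_k)\|$ stays bounded below by a positive constant, since this precludes $\Phi\in\ell^2(\Z;\C^2)$ and hence $u\in\ell^2(\Z)$.

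For each $k$, let $\tilde\alpha^{(k)}\in\D^{\Z}$ agree with $\alpha$ on $[-n_k,n_k-1]$ and be extended $n_k$-periodically. On the window $[-2n_k,2n_k-1]$,
\[
\epsilon_k \eqdef \max_{-2n_k\leq j\leq 2n_k-1}|\alpha(j)-\tilde\alpha^{(k)}(j)|
\leq \max_{0\leq j\leq n_k-1}\bigl(|\alpha(j)-\alpha(j+n_k)|+|\alpha(j)-\alpha(j-n_k)|\bigr),
\]
and $C^{n_k}\epsilon_k\to 0$ for every $C>0$ by~\eqref{eq:gorddef}. Because $\tilde\alpha^{(k)}$ is $n_k$-periodic, the Szeg\H{o} transfer matrix enjoys exact three-block repetition, $\tilde T(jn_k,0;z_0)=A^j$ for $j\in\{-1,0,1,2\}$, where $A\eqdef T(n_k,0;z_0)=\tilde T(n_k,0;z_0)$; since $\det A=z_0^{n_k}$, the matrix $B\eqdef z_0^{-n_k/2}A$ lies in $\SL(2,\C)$, and the Cayley-Hamilton identities $B+B^{-1}=(\tr B)I$ and $B^2=(\tr B)B-I$ give the classical three-block bound
\[
\max\bigl(\|B\Phi(0)\|,\|B^{-1}\Phi(0)\|,\|B^2\Phi(0)\|\bigr)\geq \tfrac12.
\]
The identity~\eqref{eq:multistep:sgz} on the even scale $2n_k$ yields $B^{\pm 2}\Phi(0)=\tilde\Phi(\pm 2n_k)\eqdef\tilde Z(\pm 2n_k,0;z_0)\Phi(0)$. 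When $n_k$ is even, the same identity also identifies $B^{\pm 1}\Phi(0)$ with $\tilde\Phi(\pm n_k)$; when $n_k$ is odd, it instead relates $\tilde Z(\pm(n_k+1),0;z_0)$ to $B^{\pm 1}$ via a single one-step Szeg\H{o} factor of norm bounded in $\|\alpha\|_\infty$, so $\|B^{\pm 1}\Phi(0)\|$ is still controlled by $\|\tilde\Phi(\pm(n_k+1))\|$ up to a constant $C_1=C_1(\|\alpha\|_\infty)$. In either case one obtains a universal $\delta=\delta(\|\alpha\|_\infty)>0$ and a finite set $J_k\subset[-2n_k,2n_k]$ with $\min_{j\in J_k}|j|\geq n_k-1$ such that $\max_{j\in J_k}\|\tilde\Phi(j)\|\geq\delta$.

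The final step is a routine perturbation estimate. Since $S(\alpha,z)$ is Lipschitz in $\alpha$ on $\{|\alpha|\leq\|\alpha\|_\infty\}$ and uniformly norm-bounded for $z\in\partial\D$, telescoping gives $\|T(n,m;z_0)-\tilde T(n,m;z_0)\|\leq C^{n-m}\epsilon_k$ for $-2n_k\leq m\leq n\leq 2n_k$ and some $C=C(\|\alpha\|_\infty)$; combining with~\eqref{eq:multistep:sgz} and at most one single-step $Y$-factor yields the analogous bound for $\|Z(j,0;z_0)-\tilde Z(j,0;z_0)\|$. The hypothesis~\eqref{eq:gorddef} then forces $\|\Phi(j)-\tilde\Phi(j)\|\to 0$ uniformly in $j\in J_k$, so $\max_{j\in J_k}\|\Phi(j)\|\geq\delta/2$ for all sufficiently large $k$. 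Any $j_k\in J_k$ realizing this maximum satisfies $|j_k|\geq n_k-1\to\infty$, producing the desired nondecaying subsequence. The principal technical obstacle is the parity-dependent bookkeeping sketched above: because the GZ cocycle is not translation-invariant under odd shifts (the $P,Q$ alternation is disturbed), $B^{\pm 1}\Phi(0)$ is not literally $\tilde\Phi(\pm n_k)$ when $n_k$ is odd, and the Szeg\H{o}--GZ bridge~\eqref{eq:multistep:sgz}, together with a single Szeg\H{o}-factor conjugation, is exactly the tool that absorbs this mismatch and lets the three-block Gordon argument go through without any parity hypothesis on $n_k$.
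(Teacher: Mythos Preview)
Your strategy is exactly the paper's: approximate $\alpha$ by the $n_k$-periodic sequence built from $\alpha_0,\ldots,\alpha_{n_k-1}$, invoke the quantitative three-block bound~\eqref{eq:3blgordbound} for the approximant (you re-derive it via Cayley--Hamilton for $B=z_0^{-n_k/2}A\in\SL(2,\C)$, which is fine), and then push the lower bound back to $\Phi$ via a Lipschitz estimate on the cocycle (the paper records this as Lemma~\ref{l:gron}). However, your parity bookkeeping on the negative side introduces a genuine gap.

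First, an $n_k$-periodic sequence cannot agree exactly with the non-periodic $\alpha$ on a window of length $2n_k$, so your definition of $\tilde\alpha^{(k)}$ should read ``agree with $\alpha$ on $[0,n_k-1]$.'' With that correction, your displayed bound on $\epsilon_k$ is still false on $[-2n_k,-n_k-1]$: the Gordon hypothesis~\eqref{eq:gorddef} only compares $\alpha(j)$ with $\alpha(j\pm n_k)$ for $0\le j\le n_k-1$, which controls $|\alpha(j)-\tilde\alpha^{(k)}(j)|$ on $[-n_k,2n_k-1]$ but says nothing about $\alpha_{-n_k-1}$. This bites precisely where you use it: for odd $n_k$ you step \emph{outward} to the even index $-(n_k+1)$, and the telescoping estimate for $Z(-(n_k+1),0;z_0)-\tilde Z(-(n_k+1),0;z_0)$ then contains the summand $\tilde Z(0,-n_k;z_0)\bigl(Y(-n_k-1,z_0)-\tilde Y(-n_k-1,z_0)\bigr)$, whose norm is of order $\Gamma^{n_k}$ times an $O(1)$ quantity and need not tend to zero. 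The paper sidesteps this by stepping \emph{inward} to $-\underline{n_k}=-(n_k-1)$: one checks
\[
\tilde\Phi\bigl(-(n_k-1)\bigr)=z_0^{-1/2}\,S\bigl(\tilde\alpha^{(k)}_{-n_k},z_0\bigr)\,B^{-1}\Phi(0),
\]
so a single Szeg\H{o} factor still absorbs the parity mismatch, while the perturbation now only involves indices in $[-(n_k-1),-1]$, all of which are Gordon-controlled. With that change (and discarding the unused $-2n_k$ from $J_k$; Cayley--Hamilton only produces $B^{-1},B,B^2$), your argument is complete and coincides with the paper's proof.
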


\begin{remark}
For simplicity and clarity, we have stated the Gordon lemmas with repetitions starting at/centered around the origin. However, it is easy to see that one can suitably modify the statements to apply if one has repetitions which are centered around some other fixed integer $\ell_0 \in \Z$.
\end{remark}

The structure of the paper is as follows. In Section~\ref{sec:gord}, we prove the new Gordon Lemmas for CMV matrices, i.e. Theorems~\ref{t:2blgord}, \ref{t:3blgord} and \ref{t:gordpot}. Section~\ref{sec:sturm} uses the new Gordon Lemmas to prove Theorems~\ref{t:sturm:scspec} and \ref{t:rotcode}. Additionally, we provide a simple proof that the spectra of Sturmian CMV matrices may be characterized as the set of spectral parameters at which the associated trace map enjoys a bounded orbit. To the best of our knowledge, trace bounds in this setting have not been worked out in this level of generality. Moreover, we believe that our proof which establishes that traces are bounded on the spectrum (i.e., the proof of Lemma~\ref{l:zsubsetb}) is somewhat simpler than the standard arguments; we use growth estimates for escaping trace orbits \cite{DGLQ} and the fact that the Lyapunov exponent vanishes on the spectrum \cite{damaniklenz:szegob}.

\section{Proofs of Gordon Lemmas} \label{sec:gord}

We now prove the new Gordon Lemmas. To deal with odd repetition scales, we apply Cayley--Hamilton to the Szeg\H{o} transfer matrices, use the relationship \eqref{eq:multistep:sgz} over a slightly smaller block of even length, and then use that $\alpha$ is bounded away from $\partial \D$ to see that one still has effective lower bounds on the GZ evolution. The precise details follow presently.

\begin{proof}[Proof of Theorem~\ref{t:2blgord}]
Let $u \in \C^{\Z}$ be a solution to $\E u = z_0 u$, and define $v = \mathcal M u$, $\Phi(n) = (u_n,v_n)^\top$, and $\Phi = \Phi(0)$. Thus, one has $\Phi(n) = Z(n,0;z_0) \Phi$ for all $n \in \Z$. Without loss, assume $|u_0|^2 + |v_0|^2 = 1$. Since $\Theta(\alpha)$ is unitary for every $\alpha \in \D$, it follows that
\begin{equation} \label{eq:uPhiconv}
\lim_{n \to \infty} u_n = 0
\iff
\lim_{n \to \infty} v_n = 0
\iff
\lim_{n \to \infty} \Phi(n) = 0.
\end{equation}
We claim that \eqref{eq:Phi:lb} holds with $\Delta$ defined by
\begin{equation} \label{eq:gammadef}
\eta
=
\frac{1}{2} \min(1,c^{-1}),
\quad
\Gamma
=
\sup_{n \in \Z} \|S(\alpha_n,z)\|,
\quad
\Delta
=
\frac{\eta}{\Gamma}.
\end{equation}
Notice that $\Gamma < \infty$ (and hence $\Delta > 0$), since $\alpha$ is bounded away from $\partial \D$. Moreover, we also have $\Gamma \geq 1$ (and hence $\Delta \leq \eta$) since $\det(S(\alpha,z_0)) = z_0$ for all $\alpha \in \D$. If $n_k$ is even, one may apply the arguments from \cite{O14} without modification to deduce that
\[
\max(\|\Phi(2n_k)\| , \| \Phi(\underline{n_k}) \|)
=
\max(\|\Phi(2n_k)\| , \| \Phi(n_k)\|)
\geq
\eta
\geq
\Delta,
\]
since \eqref{eq:multistep:sgz} implies that $|\tr(Z(j,0;z))| = |\tr(T(j,0;z))|$ whenever $j$ is even and $z \in \partial \D$. Now, suppose $n_k$ is odd. Denote  $\Psi(n) = T(n,0;z_0) \Phi$ for $n \in \Z$. By Cayley--Hamilton and \eqref{eq:2bl:rep}, we have
$$
\Psi(2n_k) - \tr(T(n_k,0;z_0)) \Psi(n_k) + z_0^{n_k} \Phi
=
0,
$$
so, consequently, we deduce
$$
\max( \| \Psi(2n_k) \|, \| \Psi(n_k) \| )
\geq
\eta.
$$
If $\|\Psi(2n_k)\| \geq \eta$, we may apply the identity \eqref{eq:multistep:sgz} to see that
\[
\|\Phi(2n_k)\| 
=
\| z_0^{-n_k} T(2n_k,0;z_0) \Phi \|
=
\| \Psi(2n_k) \|
\geq
\eta
\]
as well. Otherwise, $\|\Psi(n_k)\| \ge \eta$. Notice that $\underline{n_k} = n_k-1$, which yields
\[
S(\alpha_{n_k-1},z_0) \Phi(\underline{n_k})
=
z^{-\frac{n_k-1}{2}} T(n_k,0;z) \Phi.
\]
Thus, $\| \Phi(\underline{n_k}) \| \geq \Gamma^{-1} \| \Psi(n_k) \| \geq \Delta$, which concludes the proof of \eqref{eq:Phi:lb}. By \eqref{eq:uPhiconv}, $u \notin \ell^2(\Z)$. Since no nontrivial solutions $u$ of $\E u = z_0 u$ are square-summable, we conclude that $z_0$ is not an eigenvalue of $\E$.
\end{proof}

\begin{proof}[Proof of Theorem~\ref{t:3blgord}]
Let $z \in \partial \D$ be given, and let $u$, $v$, $\Phi$, $\Psi$, and $\Gamma$ be as in the previous theorem. If $|\tr (T(n_k,0;z))| \leq 1$, then the proof of the previous theorem yields 
\[
\max\big( \|\Phi(\underline{n_k})\|, \| \Phi(2n_k) \| \big)
\geq
\Delta,
\] 
where $\Delta$ is defined by \eqref{eq:gammadef} with $c = 1$, i.e. $\Delta = \frac{1}{2\Gamma}$. Otherwise, suppose $|\tr (T(n_k,0;z))| > 1$. Arguing as in \cite{O14} and suitably modifying the argument as in the previous proof for odd $n_k$, we obtain
$$
\max(\| \Phi(-\underline{n_k}) \|, \| \Phi(\underline{n_k}) \| )
\geq
\frac{1}{2\Gamma}
>
0.
$$
More specifically, if $n_k$ is even, then the arguments of \cite{O14} give
\[
\max(\| \Phi(-\underline{n_k}) \|, \| \Phi(\underline{n_k}) \| )
\geq
\frac{1}{2}
\geq
\Delta.
\]
Notice that this uses \eqref{eq:multistep:sgz} again to deduce $|\tr(T(j,0;z))| = |\tr(Z(j,0;z))|$ for even $j$. Now, assume $n_k$ is odd. By Cayley--Hamilton and our assumptions on $\alpha$, we have
\[
\Psi(n_k) - \tr(T(n_k,0;z)) \Phi + z\Psi(-n_k)
=
0,
\]
which implies
\[
\max\big(\|\Psi(-n_k)\|, \|\Psi(n_k)\|\big)
>
\frac{1}{2},
\]
since $|\tr(T(n_k,0;z))| > 1$. Arguing as in the proof of Theorem~\ref{t:2blgord}, we use \eqref{eq:multistep:sgz} to obtain
$$
\max\big(\| \Phi(\underline{n_k}) \|, \| \Phi(-\underline{n_k}) \| \big)
\geq
\Gamma^{-1} \max\big(\| \Psi(n_k)\|, \| \Psi(- n_k)\| \big)
\geq
\frac{1}{2\Gamma}.
$$
Thus, we have shown that 
\begin{equation} \label{eq:3blgordbound}
\max\big( \|\Phi(-\underline{n_k})\|, \| \Phi(\underline{n_k})\|, \| \Phi(2n_k) \| \big)
\geq
\Delta
\end{equation}
for all $k$.
\end{proof}

For the next proof, let us note the following simple estimate. 

\begin{lemma} \label{l:gron}
For every $r \in (0,1)$, there is a constant $C = C(r)$ such that the following holds true: For any sequences $\alpha,\widetilde\alpha \in \D^{\Z}$ such that $\|\alpha\|_\infty, \| \widetilde \alpha\|_\infty \leq r$, and $|\alpha(j) - \widetilde \alpha(j)| < \delta$ for all $j$ between $n$ and $m$, one has
$$
\| Z(n,m;z) - \widetilde Z(n,m;z)\|
\leq
\delta C^{|n-m|}
$$
for all $n,m \in \Z$ and all $z \in \partial \D$. 
\end{lemma}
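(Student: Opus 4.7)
The plan is a straightforward telescoping argument combined with uniform bounds on the one-step factors of the Gesztesy--Zinchenko cocycle, whose exponential growth absorbs the linear combinatorial prefactor that arises.

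First, I would record two uniform estimates on the one-step matrices $Y(j,z)$ (which equal either $P(\alpha_j,z)$ or $Q(\alpha_j,z)$ depending on the parity of $j$). Since $\|\alpha\|_\infty, \|\widetilde\alpha\|_\infty \le r < 1$, the prefactor $1/\rho = (1-|\alpha_j|^2)^{-1/2}$ is bounded by $(1-r^2)^{-1/2}$. Inspecting \eqref{gz:onestepmats:def}, this yields a bound $\|Y(j,z)\|, \|\widetilde Y(j,z)\| \le M$ uniformly in $j$ and in $z \in \partial \D$, where $M = M(r)$. Moreover, $P(\alpha,z)$ and $Q(\alpha,z)$ are smooth in $\alpha$ on $\{|\alpha|\le r\}$, so a direct computation using $|\rho^{-1}(\alpha) - \rho^{-1}(\widetilde\alpha)| \le L_1 |\alpha - \widetilde\alpha|$ gives a Lipschitz estimate
\[
\|Y(j,z) - \widetilde Y(j,z)\| \le L\,|\alpha(j)-\widetilde\alpha(j)|,
\]
uniformly in $j$ and $z \in \partial \D$, with $L = L(r)$.

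Next, for $n > m$ I would telescope: writing $Z(n,m;z) = Y(n-1,z) \cdots Y(m,z)$ and analogously for $\widetilde Z$, one gets the standard identity
\[
Z(n,m;z) - \widetilde Z(n,m;z)
=
\sum_{k=m}^{n-1}
Y(n-1,z)\cdots Y(k+1,z)\bigl[Y(k,z)-\widetilde Y(k,z)\bigr]\widetilde Y(k-1,z)\cdots \widetilde Y(m,z).
\]
Applying the uniform bound $M$ to each of the $n-m-1$ unchanged factors and the Lipschitz bound $L\delta$ to the middle factor yields
\[
\|Z(n,m;z) - \widetilde Z(n,m;z)\|
\le
(n-m)\,L\,\delta\,M^{n-m-1}.
\]
To absorb the linear prefactor into the exponential, simply choose $C = C(r) > M$ large enough that $jL M^{j-1} \le C^j$ for all $j \ge 1$ (for example $C = M + L + 1$ works comfortably). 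This gives the desired bound when $n > m$. For $n < m$ one uses $Z(n,m;z) = Z(m,n;z)^{-1}$ together with the identity $A^{-1} - B^{-1} = A^{-1}(B-A)B^{-1}$ and the observation that $\det Z(m,n;z)$ is a unimodular power of $z$ (so the inverse one-step matrices also obey a uniform bound depending only on $r$); this yields the same estimate with $|n-m|$ in the exponent, and the case $n=m$ is trivial.

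I do not anticipate any genuine obstacle here: the argument is entirely routine, and the only care required is in verifying uniformity of the Lipschitz and operator-norm bounds on the one-step matrices across all $z \in \partial \D$, which follows transparently from the explicit formulas in \eqref{gz:onestepmats:def} together with the hypothesis $\|\alpha\|_\infty, \|\widetilde \alpha\|_\infty \le r$.
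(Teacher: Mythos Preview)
Your proposal is correct and follows essentially the same approach as the paper: a telescoping decomposition of $Z - \widetilde Z$ into a sum of terms each containing a single factor $Y(k,z) - \widetilde Y(k,z)$, uniform norm and Lipschitz bounds on the one-step matrices coming from $\|\alpha\|_\infty,\|\widetilde\alpha\|_\infty \le r$, and absorption of the resulting linear prefactor into the exponential. The only cosmetic differences are that the paper places the tilded partial product on the left rather than the right in the telescoping identity, uses a single constant $A$ for both the norm and Lipschitz bounds, and handles the case $n<m$ by the same unimodular-determinant observation you invoke (stated more tersely).
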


\begin{proof}
Since $|\det(Z(n,m;z))| = 1$ for all $n,m \in \Z$ and all $z \in \C\setminus\{0\}$, it suffices to consider $m = 0$ and $n > 0$. Notice that
\[
Z(n,0;z) - \widetilde Z(n,0;z)
=
\sum_{j=0}^{n-1} \left( \widetilde Z(n,j+1;z) Z(j+1,0;z) 
-
\widetilde Z(n,j;z) Z(j,0;z) \right).
\]
The desired estimate follows readily by writing the summands as
\[
\widetilde Z(n,j+1;z)(Y(j,z) - \widetilde Y(j,z) )Z(j,0;z).
\]
In particular, there exists $A = A(r) \in (1, \infty)$ such that
\begin{align*}
\|P(\alpha,z)\|
& \leq
A \\
\| Q(\alpha,z) \|
& \leq
A \\
 \|P(\alpha,z) - P(\beta,z)\|
& \leq
A|\alpha - \beta| \\
\|Q(\alpha,z) - Q(\beta,z)\|
& \leq
A|\alpha - \beta|
\end{align*}
for all $z \in \partial \D$ and all $\alpha,\beta \in \D$ with $|\alpha|, \, |\beta| \leq r$. Consequently,
\[
\|Z(n,0;z) - \widetilde Z(n,0;z)\|
\leq
n \delta A^n,
\]
which yields the desired result with $C = A\sqrt[3]{3}$.
\end{proof}

\begin{proof}[Proof of Theorem~\ref{t:gordpot}]
For each $k$, consider the $n_k$-periodic CMV matrix $\E_k$ whose Verblunsky coefficients are given by repeating the string $\alpha_0,\ldots,\alpha_{n_k-1}$ periodically, and consider $u$ and $\Phi$ as in the previous arguments. By \eqref{eq:gorddef} and Lemma~\ref{l:gron}, we deduce that
$$
\lim_{k \to \infty}
\|Z_k(2n_k,0;z) - Z(2n_k,0;z)\|
=
\lim_{k \to \infty}
\|Z_k(-\underline{n_k},0;z) - Z(-\underline{n_k},0;z)\|
=
0,
$$ 
where $Z_k$ denotes the family of GZ transfer matrices associated with $\E_k$. Applying \eqref{eq:3blgordbound} to $\E_k$, we may choose $\ell_k \in \set{-\underline{n_k}, \underline{n_k}, 2n_k}$ such that
\[
\| Z_k(\ell_k,0;z) \Phi\|
\geq
\Delta,
\]
where $\Delta = \frac{1}{2\Gamma}$ as in the proof of Theorem~\ref{t:3blgord}. Thus, we have
$$
\| \Phi(\ell_k) \|
\geq
\|Z_k(\ell_k,0;z)\Phi\| - \|Z(\ell_k,0;z)-Z_k(\ell_k,0;z) \|
\geq
\frac{\Delta}{2}
$$
for all sufficiently large $k$. Consequently, $\Phi$ is not in $\ell^2(\Z)$, so neither is $u$.
\end{proof}

\section{Application: Singular Continuous Spectrum for All Sturmian CMV Matrices} \label{sec:sturm}

In this section, we prove Theorem~\ref{t:sturm:scspec}. The key ingredients are the new two-block Gordon Lemma, trace bounds on the spectrum, and a combinatorial analysis of the structure of elements of the subshift $\Omega_\theta$. We begin by recalling a few highlights from the theory of continued fractions without proofs.  For a detailed exposition of the theory with proofs, we refer the reader to Khinchin's beautiful text \cite{khin}.

To any $\theta \in (0,1) \setminus \Q $, there is a unique assocated sequence $(a_j)_{j=1}^\infty$ of positive integers with
\begin{equation} \label{d:cfrac:exp}
\theta
=
[a_1,a_2,\ldots ]
=
\cfrac{1}{a_1 + \cfrac{1}{a_2 + \cfrac{1}{\ddots}}}
\end{equation}
Stopping the continued fraction expansion of $\theta$ after $n$ steps produces rational numbers $r_n=[a_1,\ldots,a_n] = p_n/q_n$.  If we assume that all of these fractions are in lowest terms (i.e. $\gcd(p_n,q_n)=1$), then one obtains the following recursive formulae for $p_n$ and $q_n$:
\begin{align}
\label{eq:cfrac:rec1}
p_0 = 0, \quad p_1 & = 1, \quad
p_n  = a_n p_{n-1} + p_{n-2} \\
\label{eq:cfrac:rec2}
q_0 = 1, \quad q_1 & = a_1, \quad
q_n  = a_n q_{n-1} + q_{n-2}.
\end{align}
Let us now observe how the continued fraction coefficients of $\theta$ influence the combinatorial structure of elements of the hull $\Omega_\theta$. Fix $\omega_0 = s_{\theta,\theta}$, and define a sequence of words $\set{v_n}_{n=-1}^\infty$ as follows:
\begin{equation} \label{eq:sturmwordrec}
v_{-1}
=
1,
\quad
v_0
=
0,
\quad
v_1
=
v_0^{a_1-1}v_{-1},
\quad
v_n
=
v_{n-1}^{a_n} v_{n-2},
\; n \geq 1.
\end{equation}
Concretely, each $v_n$ is an element of $\set{0,1}^*$, the free monoid with two generators, so the multiplication in \eqref{eq:sturmwordrec} is given by concatenation. By \cite[Lemma~1(b)]{BIST}, one has
\begin{equation}\label{eq:omegaw}
v_k 
= 
\omega_0|_{[0,q_k - 1]}
\text{ for all } k \geq 1.
\end{equation}
Note the change in normalization: \cite{BIST} considers $s_{\theta,0}$, while we consider $\omega_0 = s_{\theta,\theta}$, which enables us to begin the analysis at the origin instead of at $1$ (so that we can apply our version of the two-block Gordon Lemma as stated in Theorem~\ref{t:2blgord}).

The combinatorial structure of $\omega_0$ is naturally manifested in the associated transfer matrices. More specifically, for $\zeta \in \C$, define
\begin{align*}
M_{-1}(\zeta) 
& =
S(\gamma,\zeta) S(\beta,\zeta)^{-1}
=
\frac{1}{\rho_\beta \rho_\gamma}
\begin{pmatrix}
1 - \beta\overline\gamma & \overline\beta-\overline\gamma \\
\beta-\gamma & 1 - \overline\beta\gamma
\end{pmatrix}, \\
M_0(\zeta) 
& = 
\zeta^{-1/2} S(\beta,\zeta)
=
\frac{\zeta^{-1/2}}{\rho_\beta}
\begin{pmatrix} \zeta & - \overline\beta \\
-\beta \zeta & 1
\end{pmatrix},
\end{align*}
and, for $n \geq 1$, define
\begin{equation}\label{def:sturm:transmats}
M_{n}(\zeta) 
= 
\zeta^{-q_n/2} T_{\omega_0}(q_n,0;\zeta),
\quad
n \geq 1, \, \zeta \in \C,
\end{equation}
where $T_{\omega_0}$ is used to denote the family of Szeg\H{o} transfer matrices associated to $\alpha_{\omega_0}$, given by \eqref{eq:sturmalphas}. For each fixed $\zeta$, one chooses branches inductively to ensure that
\begin{equation} \label{eq:zetabranch}
\zeta^{-q_n/2}
=
\zeta^{-a_n q_{n-1}/2} \zeta^{-q_{n-2}/2}
\text{ for all } n \geq 2.
\end{equation}
Of course, $M_n(\zeta)$ also depends on $\beta$, $\gamma$, and $\theta$, but we suppress this for the sake of readability. These matrices obey a recursive relationship which comes from \eqref{eq:omegaw}.

\begin{lemma} \label{l:tmrec}
Fix $\zeta \in \partial \D$. We have
\begin{equation} \label{eq:sturm:mat:rec}
M_{n}(\zeta)
=
M_{n-2}(\zeta) M_{n-1}(\zeta)^{a_{n}}
\text{ for all } n \geq 1.
\end{equation}
\end{lemma}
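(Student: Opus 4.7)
The plan is to derive the recursion directly from the word identity $v_n = v_{n-1}^{a_n} v_{n-2}$ together with the identification \eqref{eq:omegaw} of $v_k$ with the length-$q_k$ prefix of $\omega_0$, and the branch convention \eqref{eq:zetabranch}. The case $n = 1$ will require a small separate base calculation because of the different initialization $v_1 = v_0^{a_1 - 1} v_{-1}$, while $n \geq 2$ is handled uniformly.

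For $n \geq 2$, from $\omega_0|_{[0, q_n - 1]} = v_n = v_{n-1}^{a_n} v_{n-2}$ combined with \eqref{eq:omegaw} at levels $n-1$ and $n-2$, I would read off that $\alpha_{\omega_0}$ restricted to each of the $a_n$ consecutive blocks of length $q_{n-1}$ inside $[0, a_n q_{n-1})$ coincides with $\alpha_{\omega_0}|_{[0, q_{n-1} - 1]}$, and on the remaining block $[a_n q_{n-1}, q_n - 1]$ coincides with $\alpha_{\omega_0}|_{[0, q_{n-2} - 1]}$. Since $T_{\omega_0}(j', j; \zeta)$ depends only on the Verblunsky coefficients at sites $j, \dots, j' - 1$, this promotes to the matrix factorization
\[
T_{\omega_0}(q_n, 0; \zeta) = T_{\omega_0}(q_{n-2}, 0; \zeta) \cdot T_{\omega_0}(q_{n-1}, 0; \zeta)^{a_n},
\]
where the ordering on the right reflects that the leftmost Szeg\H{o} factor corresponds to the largest spatial index, even though the concatenation of the words reads left-to-right. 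Multiplying by $\zeta^{-q_n/2}$, splitting the branch via \eqref{eq:zetabranch} as $\zeta^{-q_{n-2}/2} \cdot \zeta^{-a_n q_{n-1}/2}$, and regrouping the scalar factors yields exactly $M_{n-2}(\zeta) \cdot M_{n-1}(\zeta)^{a_n}$.

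For the base case $n = 1$, I would compute directly from $v_1 = 0^{a_1 - 1} 1$: the corresponding Verblunsky coefficients are $\beta, \dots, \beta, \gamma$ with $a_1 - 1$ copies of $\beta$, so $T_{\omega_0}(q_1, 0; \zeta) = S(\gamma, \zeta) S(\beta, \zeta)^{a_1 - 1}$, giving $M_1(\zeta) = \zeta^{-a_1/2} S(\gamma, \zeta) S(\beta, \zeta)^{a_1 - 1}$. On the other side, $M_{-1}(\zeta) M_0(\zeta)^{a_1} = S(\gamma, \zeta) S(\beta, \zeta)^{-1} \cdot \zeta^{-a_1/2} S(\beta, \zeta)^{a_1}$ telescopes to the same expression. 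The main obstacle is purely bookkeeping: keeping spatial-index versus string-position orientations consistent when passing from the word recursion to a matrix product, and ensuring that the prescribed branch of $\zeta^{-q_n/2}$ is genuinely multiplicative under the split $q_n = a_n q_{n-1} + q_{n-2}$. Both points are taken care of by \eqref{eq:zetabranch}, so no substantial difficulty is expected.
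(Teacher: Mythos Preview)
Your proposal is correct and follows exactly the route the paper takes: the paper's proof is the one-line statement that the recursion follows immediately from \eqref{eq:sturmwordrec}, \eqref{eq:omegaw}, and \eqref{eq:zetabranch}, and you have simply unpacked those references (including the separate $n=1$ check needed because of the modified initialization of $v_1$ and $M_{-1}$). Your handling of the spatial-index orientation and the branch splitting is correct, so there is nothing to add.
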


\begin{proof} 
This follows immediately from \eqref{eq:sturmwordrec}, \eqref{eq:omegaw}, and \eqref{eq:zetabranch}.
\end{proof}

Now, since the transfer matrices $M_n$ obey the same recursive relationship that one observes in the Schr\"odinger case, the traces may be realized by sampling orbits of the same dynamical system, simply with different initial conditions. More concretely, if we denote 
\[
x_n
=
\frac{1}{2} \tr(M_{n-1}),
\quad
y_n
=
\frac{1}{2} \tr(M_n),
\quad
z_n
=
\frac{1}{2} \tr(M_n M_{n-1}),
\]
then there are polynomial maps $F_n: \C^3 \to \C^3$ such that
\[
F_n(x_n(\zeta),y_n(\zeta),z_n(\zeta))
=
(x_{n+1}(\zeta) ,y_{n+1}(\zeta), z_{n+1}(\zeta))
\]
for all $n \geq 0$ and all $\zeta \in \C$. Moreover, these polynomial maps enjoy a first integral given by the Fricke-Vogt invariant. More precisely,
\[
I
=
I(\zeta)
\eqdef
x_n(\zeta)^2 + y_n(\zeta)^2 + z_n(\zeta)^2 - 2x_n(\zeta) y_n(\zeta) z_n(\zeta) - 1
\]
is independent of $n$ (though it will depend on $\zeta$). This is explicitly computed in \cite{DMY1}. They find that
\[
I(\zeta)
=
\frac{\mathrm{Re}(\zeta)}{2\rho_\beta^2 \rho_\gamma^2}(\rho_\beta^2 + \rho_\gamma^2 - K)
+\frac{1}{4\rho_\beta^2 \rho_\gamma^2}( K^2 - 2K + 2\rho_\beta^2 + 2\rho_\gamma^2) - 1,
\] 
where $K = 2 - 2\mathrm{Re}(\beta\overline\gamma)$. One can now characterize the uniform spectrum $\Sigma \eqdef \sigma(\E_{\omega_0})$ as the dynamical spectrum, that is, the set of energies such that the corresponding trace map enjoys a bounded orbit. 

\begin{theorem} \label{t:sturm:tracebounds}
Define
\begin{align*}
B 
& = 
\{ \zeta \in \C : \tr(M_n(\zeta)) \text{ is a bounded sequence in } \C \}, \\
B'
& =
\{ \zeta \in \C : (x_n(\zeta), y_n(\zeta), z_n(\zeta)) \text{ is a bounded sequence in } \C^3 \}.
\end{align*}
Then $B' = \Sigma = B$.
\end{theorem}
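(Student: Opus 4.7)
The plan is to establish $B = B'$ via an algebraic argument using the Fricke--Vogt invariant, then prove the two inclusions with $\Sigma$ using Lyapunov exponent theory and the combinatorial structure of the $M_n$. The inclusion $B' \subseteq B$ is immediate from $y_n = \tfrac{1}{2}\tr M_n$. For the reverse, since $x_n = y_{n-1}$, boundedness of $(\tr M_n)_n$ forces joint boundedness of $(x_n)$ and $(y_n)$; rearranging the Fricke--Vogt invariant as
\[
(z_n - x_n y_n)^2 = I(\zeta) + 1 - (1 - x_n^2)(1 - y_n^2),
\]
the right-hand side is then bounded, so $(z_n)$ is bounded, giving $B \subseteq B'$.

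For $\Sigma \subseteq B'$, I would argue by contradiction. If $\zeta \in \Sigma$ and the trace orbit $(x_n(\zeta),y_n(\zeta),z_n(\zeta))$ escapes to infinity, then the growth estimates for escaping orbits of such trace maps due to Damanik--Gorodetski--Liu--Qu yield a subsequence along which $\|M_{n_j}(\zeta)\|$ grows exponentially in $q_{n_j}$. Since $\|M_n(\zeta)\| = \|T_{\omega_0}(q_n,0;\zeta)\|$, this manufactures a strictly positive Lyapunov exponent at $\zeta$, contradicting the vanishing of the Lyapunov exponent on $\Sigma$ proved by Damanik--Lenz.

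For $B' \subseteq \Sigma$, suppose $\zeta \notin \Sigma$ and let $\E^{(n)}$ be the $q_n$-periodic CMV matrix whose Verblunsky coefficients repeat $v_n = \omega_0|_{[0,q_n-1]}$. By strict ergodicity of $(\Omega_\theta,S)$ and a standard strong-operator approximation, $\sigma(\E^{(n)}) \to \Sigma$ in Hausdorff metric, so eventually $\zeta \notin \sigma(\E^{(n)})$, whence $|\tr M_n(\zeta)| > 2$ by the discriminant characterization of periodic CMV spectra. A standard invariant-cone argument then upgrades this to uniform hyperbolicity of the Szeg\H{o} cocycle at $\zeta$, producing $\|M_n(\zeta)\| \geq C\lambda^{q_n}$ for some $\lambda > 1$; since each $M_n \in \SL(2,\C)$ has eigenvalues $\mu_n, \mu_n^{-1}$ with $|\mu_n|$ comparable to $\|M_n\|$, one obtains
\[
|\tr M_n(\zeta)| \geq |\mu_n| - |\mu_n|^{-1} \to \infty,
\]
and hence $\zeta \notin B$.

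The main obstacle is the $\Sigma \subseteq B'$ direction, which is the geometric heart of the theorem: ruling out escaping trace orbits at spectral parameters is precisely what the classical combinatorial arguments accomplish by a delicate case analysis. The cleanness of the approach above rests on outsourcing that combinatorial work to the escaping-orbit growth estimate of Damanik--Gorodetski--Liu--Qu and then invoking the vanishing Lyapunov exponent on $\Sigma$. The converse inclusion is more standard but still requires care in translating ``eventually outside the periodic spectrum'' into quantitative exponential matrix growth, which is handled via the cone-field criterion for uniform hyperbolicity of the Szeg\H{o} cocycle.
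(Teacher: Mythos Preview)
Your decomposition into $B=B'$ plus the two cross-inclusions with $\Sigma$ is logically equivalent to the paper's chain $B'\subseteq\ZL\subseteq B\subseteq B'$ combined with the Damanik--Lenz identity $\Sigma=\ZL$. Your arguments for $B=B'$ (via the Fricke--Vogt invariant) and for $\Sigma\subseteq B$ (via the DGLQ escape estimates and vanishing Lyapunov exponent on $\Sigma$) match the paper's Lemma~\ref{l:bsubsetb'} and Lemma~\ref{l:zsubsetb} essentially verbatim.

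The genuine divergence is in $B'\subseteq\Sigma$. The paper proves the equivalent inclusion $B'\subseteq\ZL$ via the new two-block Gordon Lemma: for $\zeta\in B'$ one finds, in every $\omega\in\Omega_\theta$, squares of cyclic permutations of canonical words $v_k$ (this is the DKL combinatorics), and bounded traces then preclude decaying solutions, hence preclude hyperbolicity. You instead route through periodic approximants and uniform hyperbolicity off $\Sigma$. This is a legitimate alternative strategy, but as written it has a real gap. From uniform hyperbolicity you correctly get $\|M_n(\zeta)\|\geq C\lambda^{q_n}$, but the assertion that the eigenvalue $|\mu_n|$ is comparable to $\|M_n\|$ is false for general $\SL(2,\C)$ matrices: large norm can coexist with eigenvalues on the unit circle (shears, near-parabolic elements). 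The matrix $M_n$ sends the fiber over $\omega_0$ to that over $S^{q_n}\omega_0$, so $E^u(\omega_0)$ is \emph{not} an eigendirection of $M_n$. To repair this you must invoke $S^{q_n}\omega_0\to\omega_0$ together with continuity of the hyperbolic splitting, so that in coordinates adapted to $E^u(\omega_0)\oplus E^s(\omega_0)$ one sees $\tr M_n$ approaching the sum of the expansion and contraction rates; alternatively, an honest invariant-cone criterion (fixed cones mapped into themselves by every one-step Szeg\H{o} matrix) would pin one eigenvector of $M_n$ in each cone and give the angle bound directly, but you have not established such cones---you have only inferred UH, which does not by itself supply base-point-independent cones.

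Two smaller points. First, the Hausdorff convergence $\sigma(\E^{(n)})\to\Sigma$ needs more than strong convergence in one direction; you need that every finite window of the $q_n$-periodization of $v_n$ already occurs in some element of $\Omega_\theta$, which is true for Sturmian words but deserves a sentence. Second, note that the paper's Gordon-lemma route has a structural payoff your route does not: Lemma~\ref{l:bsubsetz} simultaneously yields the absence of decaying solutions that feeds directly into the proof of Theorem~\ref{t:sturm:scspec}, so the paper extracts both $B'\subseteq\Sigma$ and empty point spectrum from a single argument, whereas your approach to $B'\subseteq\Sigma$ would leave the eigenvalue exclusion to be done separately.
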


\begin{remark}
By minimality and strong operator convergence, we know that $\sigma(\E_{\omega}) = \Sigma$ for all $\omega \in \Omega_\theta$.
\end{remark}

When $\theta = \frac{\sqrt 5 - 1}{2}$, which corresponds to $a_n \equiv 1$, Theorem~\ref{t:sturm:tracebounds} is precisely \cite[Theorem~12.8.3]{S2}. One can extend Simon's arguments to the general case in a reasonable fashion \`a la \cite{BIST}, though the details become more technical. However, there is a simpler argument using hyperbolicity and the characterization of bounded trace orbits from \cite{DGLQ}, which we shall describe presently for the reader's convenience. First, let us recall the \emph{Lyapunov exponent}, which is given by
\begin{equation} \label{le:def}
L(\zeta)
=
\lim_{n \to \infty} \frac{1}{q_n} \log \|M_n(\zeta)\|
=
\lim_{m \to \infty} \frac{1}{m} \log\|T_{\omega_0}(m,0;\zeta) \|,
\quad
\zeta \in \partial \D. 
\end{equation}
Strictly speaking, we should define $L$ as an average over the associated hull with respect to the unique shift-invariant measure, that is, we should define
\[
L(\zeta)
=
\lim_{n \to \infty} \frac{1}{n} \int_{\Omega_\theta} \! \log\|T_\omega(n,0;\zeta)\| \, d\mu(\omega), 
\]
where $\mu$ is the unique shift-invariant Borel probability measure on $\Omega_\theta$ and $T_\omega$ denotes a Szeg\H{o} transfer matrix associated to $\alpha_\omega$ defined as in \eqref{eq:sturmalphas}. However, by \cite[Proposition~3.1]{damaniklenz:szegob}, this is equivalent to \eqref{le:def}. We then consider $\ZL$, the set on which the Lyapunov exponent vanishes, i.e.,
\[
\ZL
=
\{\zeta \in \partial \D : L(\zeta) = 0 \}.
\]
By \cite[Theorem~1.1]{damaniklenz:szegob}, $\Sigma = \ZL$. Thus, to prove Theorem~\ref{t:sturm:tracebounds}, it suffices to show that $B' =\ZL = B$. We shall do this in three steps, namely, we will show $B' \subseteq \ZL \subseteq B \subseteq B'$. 

First, we shall show that there are no decaying solutions to the generalized eigenvalue equation in $B'$. Since this precludes uniform hyperbolicity of the Gesztesy-Zinchenko cocycle \cite{DFLY2} and nonuniform hyperbolicity is absent in the present setting \cite{damaniklenz:szegob},  one can use the DFLY formula to see that this yields $B' \subseteq \ZL$ as an immediate byproduct. Moreover, once we prove $\Sigma = B'$, we will also have established absence of eigenvalues on the spectrum.

\begin{lemma} \label{l:bsubsetz}
Let $\E = \E_{\omega}$ and $\zeta \in B'$ be given. If $u$ is a nontrivial solution to the difference equation $\E u = \zeta u$, then $u_n$ does not converge to zero as $|n| \to \infty$. In particular, $B' \subseteq \ZL$.
\end{lemma}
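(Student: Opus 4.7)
The plan is to apply the two-block Gordon Lemma (Theorem~\ref{t:2blgord}) to $\E_\omega$ at scales equal to the continued-fraction denominators $q_n$ of $\theta$. Two ingredients are needed: the existence of a square $v_n v_n$ in $\alpha_\omega$ at some fixed position, and the uniform boundedness of the Szeg\H{o} trace $|\tr T(q_n,0;\zeta)|$.

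I would first establish a combinatorial fact about the characteristic word $\omega_0 = s_{\theta,\theta}$: for all sufficiently large $n$, $\omega_0$ begins with $v_n v_n$, independently of the continued-fraction expansion of $\theta$. When $a_{n+1}\geq 2$, this is immediate from the recursion $v_{n+1} = v_n^{a_{n+1}} v_{n-1}$. When $a_{n+1} = 1$, $v_{n+1} = v_n v_{n-1}$ supplies only the first copy of $v_n$, so one continues into $v_{n+2}$, which always places a further copy of $v_n$ immediately after $v_{n+1}$; a short calculation using $v_n = v_{n-1}^{a_n} v_{n-2}$ shows that the prefix of $v_n$ of length $q_n - q_{n-1} = (a_n-1)q_{n-1} + q_{n-2}$ equals $v_{n-1}^{a_n - 1} v_{n-2}$ (requiring that $v_{n-1}$ begin with $v_{n-2}$, valid for $n \geq 3$). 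Concatenating the terminal $v_{n-1}$ of $v_{n+1}$ with this prefix reconstitutes $v_{n-1}^{a_n} v_{n-2} = v_n$, completing the second square.

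The bounded-trace hypothesis of Theorem~\ref{t:2blgord} is then provided by $\zeta \in B'$: the sequence $y_n = \tfrac12 \tr(M_n)$ is bounded by definition, and since $|\zeta^{q_n/2}| = 1$ for $\zeta \in \partial\D$, one has $|\tr T_{\omega_0}(q_n,0;\zeta)| = 2|y_n| \leq C$ uniformly in $n$. Applying Theorem~\ref{t:2blgord} to $\alpha_{\omega_0}$ at scales $n_k = q_{n_0+k}$ furnishes a constant $\Delta > 0$ with
\[
\max\bigl(\|\Phi(\underline{q_n})\|,\ \|\Phi(2q_n)\|\bigr) \geq \Delta\,\|\Phi(0)\| > 0
\]
for all large $n$. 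Since $u$ is nontrivial and the Gesztesy--Zinchenko cocycle is invertible, $\|\Phi(0)\|>0$, so the lower bound is a fixed positive constant and $\Phi(m) \not\to 0$ as $m \to +\infty$. By the equivalence $u_m \to 0 \iff \Phi(m) \to 0$ noted in the proof of Theorem~\ref{t:2blgord}, $u_m \not\to 0$ as $m \to +\infty$, which \emph{a fortiori} precludes $u_m \to 0$ as $|m|\to\infty$.

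For a general $\omega \in \Omega_\theta$, minimality of $(\Omega_\theta, S)$ supplies integers $m_k$ with $\omega(m_k + j) = \omega_0(j)$ for $|j|\leq N_k$ and $N_k \to \infty$, so the square $v_n v_n$ appears in $\omega$ starting at the (fixed-in-$n$) position $m_k$ for every $n$ with $2q_n \leq N_k$; the shifted form of Theorem~\ref{t:2blgord} (cf.\ the remark preceding Section~\ref{sec:sturm}) then delivers the same non-decay conclusion. Finally, the ``in particular'' statement follows: absence of decaying solutions at $\omega_0$ precludes uniform hyperbolicity of the Gesztesy--Zinchenko cocycle via the DFLY identity \eqref{eq:multistep:sgz} and \cite{DFLY2}, while \cite{damaniklenz:szegob} precludes nonuniform hyperbolicity on Sturmian subshifts, forcing $L(\zeta) = 0$ and hence $\zeta \in \ZL$. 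I anticipate the main obstacle to be the $a_{n+1}=1$ case of the combinatorial claim, which requires bookkeeping across two levels of the Sturmian substitution hierarchy.
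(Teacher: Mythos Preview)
Your argument for the characteristic word $\omega_0$ is fine, and since the Lyapunov exponent is $\omega$-independent it does yield the inclusion $B'\subseteq\ZL$. The gap is in the first assertion of the lemma, which is claimed for an \emph{arbitrary} $\omega\in\Omega_\theta$. Your reduction of the general case to $\omega_0$ via minimality does not work: the shifted form of Theorem~\ref{t:2blgord} referenced in the remark requires all scales $n_k\to\infty$ of repetition to start at a \emph{single fixed} site $\ell_0$. Your construction produces, for each $k$, a position $m_k$ at which only the finitely many scales $q_n$ with $2q_n\le N_k$ repeat, and $m_k$ varies with $k$. If $|m_k|\to\infty$ the resulting bound
\[
\max\big(\|\Phi(m_k+\underline{q_n})\|,\|\Phi(m_k+2q_n)\|\big)\ge\Delta\,\|\Phi(m_k)\|
\]
is vacuous against the hypothesis $\Phi\to 0$, since both sides tend to zero. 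Minimality alone gives no control on $m_k$.

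The paper circumvents this by invoking a stronger combinatorial fact, \cite[Lemma~4.1]{DKL2000}: for \emph{every} $\omega\in\Omega_\theta$ and every $k\ge 3$ there is a word $s_k\in\{v_{k-1},v_k,v_{k+1},v_{k+1}v_k\}$ such that $\omega|_{[0,2|s_k|-1]}$ is a cyclic permutation of $s_ks_k$. Thus the square always sits at the origin, uniformly over the subshift, and cyclicity of the trace identifies $|\tr T_\omega(|s_k|,0;\zeta)|$ with one of $2|x_k|,2|y_k|,2|y_{k+1}|,2|z_{k+1}|$, all of which are bounded precisely by the hypothesis $\zeta\in B'$ (this is why the lemma is stated for $B'$ rather than $B$). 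Your direct verification that $\omega_0$ begins with $v_nv_n$ is the special case of this result at the characteristic word; to prove the lemma as stated you need the uniform-in-$\omega$ square-at-the-origin statement, and that requires the deeper partition analysis of \cite{DKL2000} rather than minimality.
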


\begin{proof}
Let $\zeta \in B'$ be given. Since we have bounds on traces, it remains to be seen that one can produce suitable repetitions in $\omega$ in order to apply the two-block Gordon lemma. The relevant combinatorial analysis is furnished by the proof of \cite[Lemma~4.1]{DKL2000}. More specifically, they prove that, for every $k \geq 3$, there exists a word $s = s_k \in \{ v_{k-1}, \, v_k, \, v_{k+1}, \, v_{k+1} v_k\}$ of length $n_k$ such that $\omega(0),\ldots,\omega(2n_k-1)$ is a cyclic permutation of $ss$. Thus, since $\zeta \in B'$, we know that $\tr(T_\omega(n_k,0;\zeta))$ is uniformly bounded as $k \to \infty$ by cyclicity of the trace. In particular, Theorem~\ref{t:2blgord} implies that there are no nontrivial solutions $u$ of $\E u = \zeta u$ such that $u_n \to 0$ as $|n| \to \infty$.

To prove the second statment, notice that the Gesztesy-Zinchenko cocycle cannot exhibit nonuniform hyperbolicity by \cite[Proposition~3.1]{damaniklenz:szegob}. Additionally, by \cite{DFLY2}, it cannot exhibit uniform hyperbolicity (as uniform hyperbolicity would imply the presence of decaying solutions to the difference equation). Thus, $L(\zeta) = 0$ by~\eqref{eq:multistep:sgz}.
\end{proof}

Next, we prove $\ZL \subseteq B$. Essentially, if $\zeta \notin B$, then \cite[Lemma~4.4]{DGLQ} establishes lower bounds on the growth of $\tr(M_n(\zeta))$, which yields an obvious lower bound on the growth of $\|M_n(\zeta)\|$, and these lower bounds suffice to prove $L(\zeta) > 0$.

There is one small subtlety to address regarding the invocation of \cite[Lemma~4.4]{DGLQ} in the following proof. Since \cite{DGLQ} works in the Schr\"odinger setting, they have (and use) the \textit{a priori} estimate $|\tr(M_{-1})| \leq 2$, which is false in the present setting for some values of $\beta$ and $\gamma$. However, if one replaces the condition $\delta \ge 0$ with the condition $\delta \ge \max(0,|\tr(M_{-1})| -2)$ in the statement of \cite[Lemma~4.4]{DGLQ}, the the proof of the same carries through verbatim to the present setting.

\begin{lemma} \label{l:zsubsetb}
Let $\beta$, $\gamma$, and $\theta$ be given. Then $\ZL \subseteq B$.
\end{lemma}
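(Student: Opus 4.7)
The plan is to prove the contrapositive: if $\zeta \notin B$, then $L(\zeta) > 0$, and hence $\zeta \notin \ZL$. So assume $(\tr(M_n(\zeta)))_{n\geq 0}$ is unbounded. Since $x_n$, $y_n$, and $z_n$ are traces of the successive products $M_{n-1}$, $M_n$, and $M_nM_{n-1}$ and these are related through the recursion \eqref{eq:sturm:mat:rec}, unboundedness of $\tr(M_n(\zeta))$ forces the trace-map orbit $(x_n, y_n, z_n)$ to escape every compact subset of $\C^3$. Combined with the preservation of the Fricke--Vogt invariant $I(\zeta)$, the standard escape analysis of \cite{DGLQ} then guarantees that at some finite stage $N$ we have $\max(|x_N|, |y_N|, |z_N|) \geq 1 + \delta$, where $\delta \geq \max(0, |\tr(M_{-1})| - 2)$ is the shifted threshold flagged by the author for the CMV setting.

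Applying the modified form of \cite[Lemma~4.4]{DGLQ} then yields constants $c > 0$ and $\lambda > 1$ such that
$$
|\tr(M_n(\zeta))|
\geq
c\lambda^{q_n}
$$
for all $n \geq N$. Since $|\det M_n(\zeta)| = 1$, each eigenvalue $\mu$ of $M_n(\zeta)$ obeys $|\mu| \leq \|M_n(\zeta)\|$, whence $\|M_n(\zeta)\| \geq \frac{1}{2}|\tr(M_n(\zeta))|$. Consequently
$$
\frac{1}{q_n}\log\|M_n(\zeta)\|
\geq
\log\lambda + \frac{\log(c/2)}{q_n},
$$
and noting that $\|T_{\omega_0}(q_n,0;\zeta)\| = \|M_n(\zeta)\|$ on $\partial \D$, a limit along the subsequence $m = q_n$ in \eqref{le:def} produces $L(\zeta) \geq \log\lambda > 0$, placing $\zeta$ outside $\ZL$.

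The main obstacle I anticipate is justifying rigorously that unboundedness of the scalar sequence $(\tr(M_n(\zeta)))$ really does force the full triple $(x_n,y_n,z_n)$ to cross the escape threshold at some finite stage $N$, rather than exhibiting some subtler recurrent behavior confined to one coordinate. In the Schr\"odinger setting of \cite{DGLQ} this follows because crossing the threshold is an open condition stable under iteration of the trace recursion restricted to the Fricke--Vogt surface; with the shifted threshold $\delta \geq \max(0, |\tr(M_{-1})| - 2)$ absorbing the CMV deviation from the Schr\"odinger normalization, the same combinatorial analysis transfers verbatim, as noted in the paragraph preceding the lemma.
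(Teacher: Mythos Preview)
Your approach matches the paper's: prove the contrapositive by invoking \cite[Lemma~4.4]{DGLQ} to convert unboundedness of the trace orbit into exponential growth of $|\tr(M_n(\zeta))|$, and then read off $L(\zeta)>0$ from \eqref{le:def}. The only slip is in what that lemma actually outputs: it does not directly hand you $|\tr(M_n(\zeta))| \geq c\lambda^{q_n}$, but rather a bound of the form $|y_{k_0+k}(\zeta)| \geq \tfrac{1}{2}(1+\delta)^{G_k^{(k_0)}}$, where the auxiliary integers $G_k^{(k_0)}$ satisfy the \emph{same} recursion as the continued-fraction denominators (namely $G_{k+1}^{(k_0)} = a_{k_0+k+1}G_k^{(k_0)} + G_{k-1}^{(k_0)}$) but with the shifted initial data $G_0^{(k_0)} = 1$, $G_1^{(k_0)} = a_{k_0+1}$. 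The paper then supplies, via a two-line induction on that shared recursion, a constant $C>0$ with $G_k^{(k_0)} \geq C\, q_{k_0+k}$ for all $k\geq 0$; this is precisely the step needed to pass to your asserted form $c\lambda^{q_n}$, and once you insert it your argument is complete and coincides with the paper's.
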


\begin{proof}
Suppose $\zeta \in \partial \D \setminus B$. Then, by \cite[Lemma~4.4]{DGLQ}, there exists $\delta > 0$ and $k_0 \in \Z_+$ such that
\begin{equation} \label{eq:tracegrowth}
|y_{k_0+k}(\zeta)|
\geq
\frac{1}{2} (1+\delta)^{G_k^{(k_0)}}
\text{ for all } k \geq k_0,
\end{equation}
where
\begin{equation} \label{eq:Gdef}
G_0^{(k_0)}
=
1,
\quad
G_1^{(k_0)}
=
a_{k_0+1},
\quad
G_{k+1}^{(k_0)}
=
a_{k_0+k+1} G_k^{(k_0)} + G_{k-1}^{(k_0)},
\;
k \geq 1.
\end{equation}
Notice that \cite{DGLQ} works in the Schr\"odinger setting, but this only amounts to a change in initial conditions. More specifically, our transfer matrices obey \eqref{eq:sturm:mat:rec}, which is precisely the same recursion as the Schr\"odinger matrices, and this is all that is used in the proof of \cite[Lemma~4.4]{DGLQ}, so their characterization of escaping trace orbits applies to the present setting. Comparing \eqref{eq:Gdef} to \eqref{eq:cfrac:rec2}, we see that there is a constant $C > 0$ such that
\begin{equation} \label{eq:Gqbound}
\frac{G_k^{(k_0)}}{q_{k + k_0}}
\geq
C
\text{ for every } k \geq 0.
\end{equation}
More specifically, define 
\[
C 
= 
\min\!\left(q_{k_0}^{-1}, a_{k_0 + 1}/q_{k_0 + 1} \right)
\] 
so that \eqref{eq:Gqbound} holds for $k = 0$ and $k = 1$. One then obtains \eqref{eq:Gqbound} for all $k$ by induction. Consequently, we have 
\[
L(\zeta)
\geq
C \log(1+\delta) 
> 
0\] 
by \eqref{eq:tracegrowth} and \eqref{eq:Gqbound}.
\end{proof}

Finally, we prove $B \subseteq B'$. This is almost immediate from the conservation of the Fricke-Vogt character:
\begin{equation} \label{eq:fvinv}
x_n(\zeta)^2 + y_n(\zeta)^2 + z_n(\zeta)^2 - 2x_n(\zeta) y_n(\zeta) z_n(\zeta) -1 
=
I(\zeta)
\text{ for all } n \geq 0.
\end{equation}

\begin{lemma} \label{l:bsubsetb'}
With setup as above, $B \subseteq B'$.
\end{lemma}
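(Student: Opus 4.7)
The plan is to exploit the Fricke--Vogt invariant \eqref{eq:fvinv}, which is a polynomial constraint that ties $z_n(\zeta)$ to $x_n(\zeta)$, $y_n(\zeta)$, and the conserved quantity $I(\zeta)$. Since $I(\zeta)$ is independent of $n$, once uniform bounds on $x_n$ and $y_n$ are in hand, the invariant should immediately deliver a uniform bound on $z_n$ when viewed as a quadratic relation in $z_n$.

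First I would observe that membership of $\zeta$ in $B$ forces both $(x_n(\zeta))$ and $(y_n(\zeta))$ to be bounded: by definition $y_n = \tfrac{1}{2}\tr(M_n)$, while $x_{n+1}=y_n$ for $n \geq 0$ and $x_0 = \tfrac{1}{2}\tr(M_{-1})$ is a fixed number (the matrix $M_{-1}$ does not depend on $\zeta$). Hence, fixing $\zeta \in B$, there exists a constant $K$ with $|x_n(\zeta)|, |y_n(\zeta)| \leq K$ for all $n$.

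Next I would complete the square in \eqref{eq:fvinv} to obtain
\[
(z_n - x_n y_n)^2
=
(x_n y_n)^2 - x_n^2 - y_n^2 + 1 + I(\zeta),
\]
and take absolute values on both sides. Since $|w^2| = |w|^2$ for any $w \in \C$, the right-hand side yields a uniform upper bound on $|z_n - x_n y_n|^2$ depending only on $K$ and $|I(\zeta)|$; a single application of the triangle inequality then bounds $|z_n|$ uniformly, placing $\zeta$ in $B'$. There is no substantive obstacle here---the only subtlety is that $x_n, y_n, z_n$ are complex-valued in general, so one cannot invoke sign arguments or the nonnegativity of real squares when extracting a root from the Fricke--Vogt relation. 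The identity $|w^2| = |w|^2$ sidesteps this issue cleanly.
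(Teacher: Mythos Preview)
Your proof is correct and follows precisely the approach outlined in the paper: use the Fricke--Vogt invariant \eqref{eq:fvinv} to solve for $z_n$ in terms of the already-bounded $x_n$ and $y_n$. You have simply supplied the details (completing the square and taking moduli) that the paper's one-line proof leaves to the reader.
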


\begin{proof}
Let $\zeta \in B$ be given. To show $\zeta \in B'$, it suffices to show that $z_n(\zeta)$ is a bounded sequence. However, we can solve \eqref{eq:fvinv} for $z_n$ in terms of $(x_n,y_n)$ and then use boundedness of the latter to deduce boundedness of the former.
\end{proof}

\begin{proof}[Proof of Theorem~\ref{t:sturm:tracebounds}]
This is immediate from Lemmas~\ref{l:bsubsetz}, \ref{l:zsubsetb}, and \ref{l:bsubsetb'}.
\end{proof}

\begin{proof}[Proof of Theorem~\ref{t:sturm:scspec}]
By Lemma~\ref{l:bsubsetz}, $\E_\omega$ has no eigenvalues in $B'$. Since $\Sigma = B'$ by Theorem~\ref{t:sturm:tracebounds}, we are done.
\end{proof}

We now turn towards codings of rotations. Unlike the Sturmian setting, the techniques of \cite{O14} apply without any modification.

\begin{proof}[Proof of Theorem~\ref{t:rotcode}]
This follows immediately from the arguments that prove \cite[Theorem~4]{O14}. Simply replace \cite[Proposition~2]{O14} by Theorem~\ref{t:3blgord}. In particular, it follows from the arguments of \cite{Kam96} that the set of phases for which the associated Verblunsky coefficients satisfy the assumptions of three-block Gordon Lemma (Theorem~\ref{t:3blgord}) has Lebesgue measure at least
\[
\limsup_{n \to \infty} \left( 1 - \frac{4q_n}{q_{n+1}} \right),
\] 
and \cite[Lemma~4]{Kam96} implies that
\[
\limsup_{n \to \infty} \frac{q_{n+1}}{q_n}
\geq
\frac{1}{2}\left[\limsup_{n \to \infty} a_n + \sqrt{\left(\limsup_{n \to \infty} a_n \right)^2 + 4} \right].
\]
This yields the desired statement by almost-sure invariance of the pure point spectrum \cite[Theorem~10.16.1]{S2}.
\end{proof}

\section*{Acknowledgements} J.\ F.\ is grateful to David Damanik for helpful comments. J.\ F.\ was supported in part by NSF grants DMS--1067988 and DMS--1361625.

\end{document}